\numberwithin{equation}{section}
\theoremstyle{plain}
\newtheorem{theorem}{Theorem}[section]
\newtheorem{corollary}[theorem]{Corollary}
\newtheorem{lemma}[theorem]{Lemma}
\DeclareMathOperator{\cl}{cl}
\title{The free $m$-cone of a matroid and its $\mathcal{G}$-invariant}
\author[J.~Bonin]{Joseph E.~Bonin} \address
{Department of Mathematics\\ The George Washington University\\
  Washington, D.C. 20052, USA} \email {jbonin@gwu.edu, kevinlong@gwmail.gwu.edu}
\author[K.~Long]{Kevin Long} 
\date{\today}
\subjclass{05B35}
\begin{document}
	
\begin{abstract}
  For a matroid $M$, its configuration determines its
  $\mathcal{G}$-invariant. Few examples are known of pairs of matroids
  with the same $\mathcal{G}$-invariant but different configurations.
  In order to produce new examples, we introduce the free $m$-cone
  $Q_m(M)$ of a loopless matroid $M$, where $m$ is a positive integer.
  We show that the $\mathcal{G}$-invariant of $M$ determines the
  $\mathcal{G}$-invariant of $Q_m(M)$, and that the configuration of
  $Q_m(M)$ determines $M$; so if $M$ and $N$ are nonisomorphic and
  have the same $\mathcal{G}$-invariant, then $Q_m(M)$ and $Q_m(N)$
  have the same $\mathcal{G}$-invariant but different configurations.
  We prove analogous results for several variants of the free
  $m$-cone.  We also define a new matroid invariant of $M$, and show
  that it determines the Tutte polynomial of $Q_m(M)$.
\end{abstract}

\maketitle

\section{Introduction}\label{section:introduction}

Of the following three matroid invariants, listed from weakest to
strongest, we are mainly interested in the second and third (we recall
them below and treat them more fully in Section
\ref{section:background}):
\begin{enumerate}
\item[(1)] the Tutte polynomial,
\item[(2)] the $\mathcal{G}$-invariant (introduced by Derksen
  \cite{G-inv}), and
\item[(3)] the configuration (introduced by Eberhardt \cite{config}).
\end{enumerate}
Having two matroid invariants, one of which can be derived from the
other, raises a basic question: how can we construct matroids that
share the weaker invariant but not the stronger?  We address this
question for the $\mathcal{G}$-invariant and configuration. Eberhardt
proved that the configuration of a matroid $M$ determines its Tutte
polynomial. Bonin and Kung \cite{catdata} strengthened this result,
proving that the configuration of $M$ determines its
$\mathcal{G}$-invariant. We treat a construction with which we can
produce matroids with the same $\mathcal{G}$-invariant but different
configurations.

The natural starting point is the most well known matroid invariant,
the Tutte polynomial.  Given a matroid $M=(E,r)$, its \textit{Tutte
  polynomial} is
$$T(M;x,y)=\sum_{A\subseteq E}(x-1)^{r(E)-r(A)}(y-1)^{|A|-r(A)}.$$
The data in this polynomial is the multiset
$\{(|A|,r(A))\,:\,A\subseteq E\}$.  The significance of the Tutte
polynomial comes in part from its being a universal invariant for the
deletion-contraction rule \cite{Tutte}.

Derksen \cite{G-inv} introduced the $\mathcal{G}$-invariant, which
generalizes the Tutte polynomial.  Derksen and Fink \cite{valuative}
later showed that the $\mathcal{G}$-invariant is a universal valuative
invariant for subdivisions of matroid base polytopes.  For a matroid
$M=(E,r)$ where $|E|=n$, the data in its $\mathcal{G}$-invariant is
the multiset of $n$-tuples of rank increases
$$\bigl(r(\{e_1\}),r(\{e_1,e_2\}) - r(\{e_1\}),
r(\{e_1,e_2,e_3\}) - r(\{e_1,e_2\}),\ldots,r(E)-r(E-\{e_n\})\bigr)$$
over all permutations $e_1,e_2,\ldots,e_n$ of $E$.  Bonin and Kung
\cite{catdata} showed that this data is equivalent to the multiset of
$(r(E)+1)$-tuples
$$(|F_0|,|F_1-F_0|,|F_2-F_1|,\ldots,|F_{r(E)}-F_{r(E)-1}|)$$
over all flags, that is, chains $(F_0,F_1,\ldots,F_{r(E)})$ of flats
of $M$ where $r(F_i)=i$.

Eberhardt \cite{config} showed that, for a matroid with no coloops,
the Tutte polynomial can be computed from a small amount of data about
the cyclic flats (the flats that are unions of circuits), namely, from
the abstract lattice formed by the cyclic flats, along with the size
and rank of the cyclic flat corresponding to each element in this
lattice.  This data is the configuration of the matroid. Eberhardt's
result was extended in Bonin and Kung \cite{catdata}, who showed that
the configuration of a matroid with no coloops determines its
$\mathcal{G}$-invariant.  (Extending this to all matroids by also
recording the number of coloops is routine.)

We develop a construction that yields examples that show that the
configuration is strictly stronger than the $\mathcal{G}$-invariant:
we show how to construct pairs of matroids with the same
$\mathcal{G}$-invariant and different configurations. (When we started
this work, the only other such examples we knew of were those treated
in \cite{catdata}, namely, for $k\geq 4$, rank-$k$ Dowling matroids
based on nonisomorphic groups of the same order.  Since then, a very
different technique for constructing such examples has been developed;
see \cite{diffconfig}.)  Let $M$ be a loopless matroid and $m$ be a
positive integer. We will define a matroid $Q_m(M)$ that we call the
free $m$-cone of $M$. Our main result is that if $M$ and $N$ are
nonisomorphic and have the same $\mathcal{G}$-invariant, then $Q_m(M)$
and $Q_m(N)$ have the same $\mathcal{G}$-invariant and different
configurations.

We use the notation and terminology in Oxley \cite{oxley}.  More
specialized background is treated in Section \ref{section:background}.
In Section \ref{section:free cone}, we define the free $m$-cone and
develop some of its properties, which we then use to prove that the
$\mathcal{G}$-invariant of $M$ determines that of its free $m$-cone,
and that the configuration of the free $m$-cone of $M$ determines
$M$. In Section \ref{section:variants}, we treat several variants of
the free $m$-cone and show that, with some exceptions for small $m$,
our main results in Section \ref{section:free cone} also hold for
them.  The Higgs lift of a matroid is a special case of these
variants.  In Section \ref{section:size rank coloop}, we introduce a
matroid invariant that lies between the Tutte polynomial and the
$\mathcal{G}$-invariant, and show that it determines the Tutte
polynomial of the free $m$-cone and of variants of the free $m$-cone.

We use $[n]$ to denote the set $\{1,2,\ldots,n\}$.

\section{Background}\label{section:background}

Given a rank-$k$ matroid $M$ on $E=[n]$ with rank
function $r$ and a permutation $\pi$ on $E$, the \textit{rank
  sequence} $\underline{r}(\pi)=r_1r_2\ldots r_n$ is given by
$r_1=r(\pi(1))$ and, for $i>1$,
$$r_i=r(\{\pi(j)\,:\, j\in [i] \})
-r(\{\pi(j)\,:\, j\in[i-1] \}).$$ Thus, $\{\pi(i)\,: \, r_i=1\}$ is a
basis of $M$.  Each rank sequence is an $(n,k)$\textit{-sequence},
that is, a sequence of $k$ ones and $n-k$ zeroes.  For each
$(n,k)$-sequence $\underline{r}$, let $[\underline{r}]$ be a formal
symbol, and let $\mathcal{G}(n,k)$ be the vector space over a field of
characteristic zero consisting of all formal linear combinations of
such symbols. The $\mathcal{G}$-\textit{invariant of} $M$ is defined
by $$\mathcal{G}(M)=\sum_\pi [\underline{r}(\pi)]$$ where the sum is
over all permutations $\pi$ of $E$.

Another perspective on $\mathcal{G}(M)$ was developed in
\cite{catdata}.  An $(n,k)$\emph{-composition} is an integer sequence
$(a_i)=(a_0, a_1, \ldots, a_k)$ for which $a_0+a_1+\cdots +a_k=n$
where $a_0\geq 0$ and $a_i>0$ for $i\in [k]$.  Let the matroid $M$ be
as above.  A \emph{flag} of $M$ is a sequence
$(X_i)=(X_0, X_1, \ldots, X_k)$ where $X_i$ is a rank-$i$ flat of $M$,
and $X_i\subset X_{i+1}$ for $i<k$.  The \emph{composition} of a flag
$(X_i)$ is $(a_i)$ where $a_0=|X_0|$ and $a_i=|X_i-X_{i-1}|$ for
$i\in [k]$. Thus, $(a_i)$ is an $(n,k)$-composition. Let
$\nu(M;(a_i))$ be the number of flags of $M$ with composition
$(a_i)$. The \emph{catenary data} of $M$ is the
$\binom{n}{k}$-dimensional vector $(\nu(M;(a_i)))$ indexed by
$(n,k)$-compositions.

Bonin and Kung \cite{catdata} defined a special basis of
$\mathcal{G}(n,k)$, the $\gamma$-basis, whose vectors $\gamma((a_i))$
are indexed by $(n,k)$-compositions $(a_i)$.  The change-of-basis
result from \cite{catdata}, stated next, connects $\mathcal{G}(M)$ and
the catenary data of $M$.

\begin{theorem}\label{thm:catdata}
  The catenary data of $M$ determines $\mathcal{G}(M)$ and conversely
  since
  $$\mathcal{G}(M)=\sum_{(a_i)}\nu(M;(a_i))\gamma((a_i)).$$
\end{theorem}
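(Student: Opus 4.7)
My plan is to prove the formula by partitioning $\mathcal{G}(M)=\sum_\pi[\underline{r}(\pi)]$ according to the flag induced by each permutation, and to deduce the converse from the fact that the $\gamma((a_i))$'s form a basis.

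Every permutation $\pi$ of $E$ determines a unique flag $(F_0,\ldots,F_k)$, where $F_i=\cl(\{\pi(1),\ldots,\pi(p_i)\})$ and $p_i$ is the position of the $i$-th rank-increase in $\underline{r}(\pi)$. The $\gamma$-basis vector $\gamma((a_i))$ should be read as $\sum_\pi [\underline{r}(\pi)]$ taken over permutations $\pi$ whose induced flag is any one fixed flag of composition $(a_i)$; this sum depends only on the composition, because relabelling the ground set provides a bijection between the permutations associated with any two flags of the same composition and preserves rank sequences. Thus each flag of composition $(a_i)$ contributes $\gamma((a_i))$ to $\mathcal{G}(M)$, and summing over flags and grouping by composition yields
$$\mathcal{G}(M)=\sum_{(a_i)}\nu(M;(a_i))\gamma((a_i)),$$
the forward direction (catenary data determines $\mathcal{G}(M)$).

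For the converse, one needs the $\gamma((a_i))$'s to be linearly independent in $\mathcal{G}(n,k)$. Since there are $\binom{n}{k}$ compositions and $\dim\mathcal{G}(n,k)=\binom{n}{k}$, independence forces them to be a basis, and the coefficients $\nu(M;(a_i))$ are then recoverable from $\mathcal{G}(M)$. I would prove independence by a triangularity argument: fix a total order on compositions (reverse lex, say), and for each $(a_i)$ exhibit a witness rank sequence $\underline{r}_{(a_i)}$ that appears with nonzero coefficient in $\gamma((a_i))$ but with zero coefficient in $\gamma((b_i))$ for every strictly earlier $(b_i)$. A natural candidate is the sequence in which the $i$-th rank-increase occurs as early as possible consistent with the composition.

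The main obstacle is this last step: the partition argument is formal bookkeeping once $\gamma$ is unwound, whereas establishing triangularity requires combinatorially controlling which rank sequences arise from a flag of each composition and with what multiplicity, and then choosing the ordering on compositions so that the necessary vanishing of cross-coefficients holds.
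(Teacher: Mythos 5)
This theorem is imported from \cite{catdata}; the present paper states it without proof, so there is no in-paper argument to compare against and your proposal must stand on its own. Your forward direction is the right idea and is essentially how Bonin and Kung argue it: each permutation determines a flag, the fiber over a flag $(F_0,\ldots,F_k)$ consists exactly of the permutations that place an element of $F_i-F_{i-1}$ at the position $p_i$ of the $i$th rank increase and distinct unused elements of $F_i$ at the positions strictly between $p_i$ and $p_{i+1}$, and the number of such fillings with a prescribed rank sequence is a product of falling factorials depending only on the positions $p_i$ and the composition. (Your ``relabelling'' argument is valid only after this explicit characterization of the fiber is in hand; that characterization is the actual content.) One caveat: you are in effect \emph{defining} $\gamma((a_i))$ to be the fiber contribution, whereas the theorem refers to the specific $\gamma$-basis constructed in \cite{catdata}; to prove the theorem as stated you must check your vectors agree with that definition, or else weaken what you prove to an existence statement. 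That is minor.

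The genuine gap is in the converse, and it sits exactly where you say the main obstacle is. Your proposed witness --- the rank sequence in which the rank increases occur as early as possible --- does not work. The only constraint on the positions of the increases for a flag of composition $(a_i)$ is the upper bound $p_i\leq a_0+\cdots+a_{i-1}+1$ (the first $p_i-1$ positions must be fillable from $F_{i-1}$); there is no lower bound. Hence ``as early as possible'' gives $[1^k0^{\,n-k}]$ for every composition with $a_0=0$, so that sequence has nonzero coefficient in $\gamma((b_i))$ for all such $(b_i)$ and cannot distinguish compositions. The correct witness is the opposite extreme: the sequence with ones at positions $a_0+\cdots+a_{i-1}+1$, namely the rank sequence of a permutation listing $F_0$, then $F_1-F_0$, then $F_2-F_1$, and so on. A composition $(b_i)$ contributes this sequence with nonzero coefficient if and only if $b_0+\cdots+b_{i-1}\geq a_0+\cdots+a_{i-1}$ for all $i$, i.e., the partial sums of $(b_i)$ dominate those of $(a_i)$, with equality throughout only when $(b_i)=(a_i)$. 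Ordering compositions by any linear extension of this dominance order makes the change-of-basis matrix triangular with nonzero diagonal entries, which is the independence statement your converse requires.
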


Given a matroid $M=(E, r)$, a subset $A$ of $E$ is \textit{cyclic} if
$M|A$ has no coloops. Just as lines and planes refer to flats of ranks
$2$ and $3$, respectively, if such flats are cyclic, then we call them
cyclic lines or cyclic planes.  More generally, a cyclic flat is a flat
that is cyclic.  The set $\mathcal{Z}(M)$ of cyclic flats of $M$ is a
lattice under inclusion.  The configuration, introduced in Eberhart
\cite{config}, is the abstract lattice of cyclic flats of $M$, without
the sets but with their size and rank. More precisely, the
\textit{configuration} of a matroid $M$ with no coloops is the triple
$(L, s, \rho)$, where $L$ is a lattice and $s$ and $\rho$ are
functions with domain $L$ where there is an isomorphism
$\phi: L\to \mathcal{Z}(M)$ for which $s(x)=|\phi(x)|$ and
$\rho(x)=r(\phi(x))$ for all $x\in L$.  Many nonisomorphic matroids
can have the same configuration: O.~Giménez constructed $n!$
non-paving matroids of rank $2n+ 2$ on $4n+ 5$ elements, all with the
same configuration \cite[see Theorem 5.7]{cyclic flats}.

Bonin and Kung \cite{catdata} showed that if $M$ has no coloops, then
$\mathcal{G}(M)$ can be found from the configuration of $M$.  If $M$
has coloops, $\mathcal{G}(M)$ can be found from
$\mathcal{G}(M\setminus X)$ and $|X|$ where $X$ is the set of coloops,
so we focus on matroids without coloops.

\begin{figure}[t]
  \centering
  \begin{tikzpicture}[scale=1.1]
    \draw[thick](0,0)--(1.5,0);
    \draw[thick](0,-.75)--(1.5,-.75);
    \filldraw (0,0) node {} circle  (2.2pt);
    \node (111) at (0,.23) {1};
    \filldraw (.75,0) node {} circle  (2.2pt);
    \node at (.75,.23) {2};
    \filldraw (1.5,0) node {} circle  (2.2pt);
    \node at (1.5,.23) {3};
    \filldraw (0,-.75) node {} circle  (2.2pt);
    \node at (0,-.52) {4};
    \filldraw (.75,-.75) node {} circle  (2.2pt);
    \node at (.75,-.52) {5};
    \filldraw (1.5,-.75) node {} circle  (2.2pt);
    \node at (1.5,-.52) {6};
    \node at (1,-1.3) {$M_1$};
  \end{tikzpicture}
  \hspace{30pt}
  \begin{tikzpicture}[scale=1.1]
    \draw[thick](0,0)--(2,.5);    
    \draw[thick](0,0)--(2,-.5);    
    \filldraw (0,0) node {} circle  (2.2pt);    
    \node at (0,.23) {1};
    \filldraw (1,.25) node {} circle  (2.2pt);    
    \node at (1,.48) {2};    
    \filldraw (2,.5) node {} circle  (2.2pt);    
    \node at (2,.73) {3};    
    \filldraw (1,-.25) node {} circle  (2.2pt);    
    \node at (1,-0.5) {4};    
    \filldraw (2,-.5) node {} circle  (2.2pt);    
    \node at (2,-.75) {5};    
    \filldraw (1.8,0) node {} circle  (2.2pt);    
    \node at (2.05,0) {6};    
    \node at (1,-1) {$M_2$};
\end{tikzpicture}
\caption{Two rank-$3$ sparse paving matroids.}
\label{fig:continuing examples}
\end{figure}
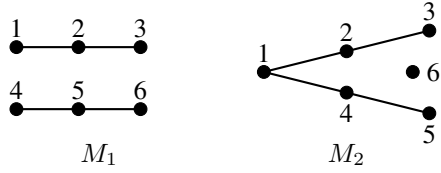

In Figures \ref{fig:continuing examples} and \ref{fig:configuration
  examples}, we give an example of nonisomorphic matroids $M_1$ and
$M_2$ that have the same configuration.  (We will refer to this
example throughout the paper.)  It follows that $M_1$ and $M_2$ have
the same $\mathcal{G}$-invariant, which is
$$\mathcal{G}(M_1)=\mathcal{G}(M_2)=648[111000]+72[110100].$$
They then have the same catenary data:
$$\nu(M_1;0,1,2,3)=  \nu(M_2;0,1,2,3)
= 6 \text{ \ and \ } \nu(M_1;0,1,1,4)= \nu(M_2;0,1,1,4) = 18.$$

  \begin{figure}[t]
    \centering
    \begin{tikzpicture}[scale = 1]
      \node (111) at (1, 3) {$\{1,2,3,4,5,6\}$};
      \node (110) at (0, 2) {$\{1,2,3\}$};
      \node (101) at (2, 2) {$\{4,5,6\}$};
      \node (100) at (1, 1) {$\varnothing$};
      \draw[very thick] (111) -- (110);
      \draw[very thick] (111) -- (101);
      \draw[very thick] (110) -- (100);
      \draw[very thick] (101) -- (100);      
      \node at (1,0.25) {(a) \ $\mathcal{Z}(M_1)$};
    \end{tikzpicture}
    \hspace{20pt} 
    \begin{tikzpicture}[scale = 1]
      \node (111) at (1, 3) {$\{1,2,3,4,5,6\}$};
      \node (110) at (0, 2) {$\{1,2,3\}$};
      \node (101) at (2, 2) {$\{1,4,5\}$};
      \node (100) at (1, 1) {$\varnothing$};			
      \draw[very thick] (111) -- (110);
      \draw[very thick] (111) -- (101);
      \draw[very thick] (110) -- (100);
      \draw[very thick] (101) -- (100);
      \node at (1,0.25) {(b) \ $\mathcal{Z}(M_2)$};
    \end{tikzpicture}
	    \hspace{20pt} 
    \begin{tikzpicture}[scale = 1]
      \node (111) at (1, 3) {$(6,3)$};
      \node (110) at (0, 2) {$(3,2)$};
      \node (101) at (2, 2) {$(3,2)$};
      \node (100) at (1, 1) {$(0,0)$};
      \draw[very thick] (111) -- (110);
      \draw[very thick] (111) -- (101);
      \draw[very thick] (110) -- (100);
      \draw[very thick] (101) -- (100);
      \node at (1,0.25) {(c)};
    \end{tikzpicture}
    \caption{Parts (a) and (b) show the lattice of cyclic flats of
      $M_1$ and $M_2$. Replacing each set by just its size and rank
      gives the configuration of $M_1$ and $M_2$, shown in part (c).}
\label{fig:configuration examples}
\end{figure}
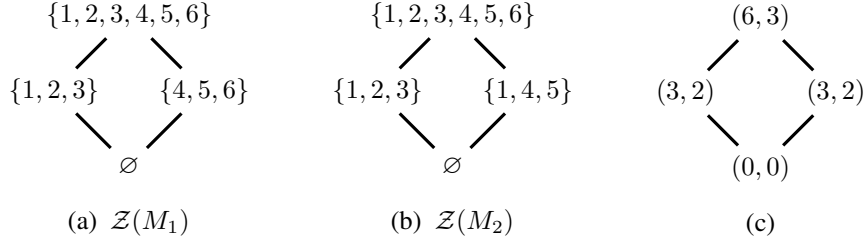

Brylawski \cite{affine} showed that a matroid is determined by its
cyclic flats and their ranks, which is more data than the
configuration gives.  We will use the following related result of Sims
\cite{sims} and Bonin and de Mier \cite{cyclic flats}.

\begin{theorem}\label{thm:cfaxioms}
  For a set $\mathcal{Z}$ of subsets of a set $E$ and a function
  $r : \mathcal{Z}\to \mathbb{Z}$, there is a matroid $M$ on $E$ with
  $\mathcal{Z}(M) = \mathcal{Z}$ and $r_M(X) = r(X)$ for all
  $X\in\mathcal{Z}$ if and only if
  \begin{itemize}
  \item[\emph{(Z0)}] $(\mathcal{Z},\subseteq)$ is a lattice,
  \item[\emph{(Z1)}] $r(0_\mathcal{Z})=0$, where $0_\mathcal{Z}$ is
    the least set in $\mathcal{Z}$,
  \item[\emph{(Z2)}] $0<r(Y)-r(X)<|Y-X|$ for all $X,Y \in \mathcal{Z}$
    with $X\subsetneq Y$, and
  \item[\emph{(Z3)}] for all sets $X, Y$ in $\mathcal{Z}$ (or,
    equivalently, just incomparable sets in $\mathcal{Z}$),
    \begin{equation}\label{ineq:submcyc}
      r(X\vee Y)+r(X\wedge Y)+|(X\cap Y)-(X\wedge Y)|\leq r(X)+r(Y).
    \end{equation}
  \end{itemize}
\end{theorem}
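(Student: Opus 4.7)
The plan is to prove the two directions separately. For necessity, (Z0) is the classical result that the cyclic flats of $M$ form a lattice, with meet given by the largest cyclic flat contained in the intersection and join given by the closure of the union; (Z1) is immediate since $0_{\mathcal{Z}}$ is the set of loops. For (Z2), the inequality $r(Y)>r(X)$ holds because distinct comparable flats have distinct ranks, and $r(Y)-r(X)<|Y-X|$ holds because equality would force every element of $Y-X$ to be a coloop of $M|Y$, contradicting cyclicity of $Y$. Axiom (Z3) follows from standard matroid submodularity $r(X)+r(Y)\geq r(X\cup Y)+r(X\cap Y)$ combined with the equality $r(X\vee Y)=r(X\cup Y)$ (since $X\vee Y=\cl(X\cup Y)$) and the identity $r(X\cap Y)=r(X\wedge Y)+|(X\cap Y)-(X\wedge Y)|$; the identity holds because the set of coloops of $M|(X\cap Y)$ is precisely $(X\cap Y)-(X\wedge Y)$, which in turn follows from the maximality of $X\wedge Y$ among cyclic flats of $M$ contained in the flat $X\cap Y$.

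For the converse, given $(\mathcal{Z},r)$ satisfying (Z0)--(Z3), I would define a candidate rank function on $2^E$ by
\[
\widehat{r}(A)\;=\;\min_{Z\in\mathcal{Z}}\bigl(r(Z)+|A-Z|\bigr),
\]
and show that $\widehat{r}$ is a matroid rank function whose cyclic flats, together with their ranks, recover $(\mathcal{Z},r)$. Boundedness $0\leq \widehat{r}(A)\leq |A|$ (take $Z=0_{\mathcal{Z}}$ and use (Z1)) and unit-step monotonicity $\widehat{r}(A)\leq \widehat{r}(A\cup\{e\})\leq \widehat{r}(A)+1$ are straightforward from the definition. The main technical obstacle will be submodularity of $\widehat{r}$: for $A,B\subseteq E$, I would pick cyclic flats $X,Y\in\mathcal{Z}$ attaining the minima in $\widehat{r}(A)$ and $\widehat{r}(B)$, apply (Z3) to $X$ and $Y$, and then carefully account for the elements of $A\cup B$ by their membership in $X$, $Y$, $X\cap Y$, $X\wedge Y$, and $X\vee Y$, so that the correction term $|(X\cap Y)-(X\wedge Y)|$ in (Z3) compensates exactly for the discrepancy between $|A-X|+|B-Y|$ and $|(A\cup B)-(X\vee Y)|+|(A\cap B)-(X\wedge Y)|$.

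Finally, I would verify that the cyclic flats of the resulting matroid $M=(E,\widehat{r})$ are exactly the sets in $\mathcal{Z}$ with the prescribed ranks. For each $Z\in\mathcal{Z}$, using $Z$ itself as the minimizer gives $\widehat{r}(Z)\leq r(Z)$, while the reverse inequality follows by applying (Z3) to $Z$ and an arbitrary minimizer $Z'$ and invoking (Z2); similar estimates then yield the strict inequality $\widehat{r}(Z\cup\{e\})>\widehat{r}(Z)$ for $e\notin Z$, showing $Z$ is a flat of $M$, and the equality $\widehat{r}(Z-\{e\})=\widehat{r}(Z)$ for $e\in Z$, showing $Z$ is cyclic. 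Conversely, for a cyclic flat $W$ of $M$ with minimizer $Z\in\mathcal{Z}$ for $\widehat{r}(W)$, flatness of $W$ forces $Z\subseteq W$ (else an element of $Z-W$ could be added to $W$ without raising the rank) and cyclicity of $W$ forces $W\subseteq Z$ (else an element of $W-Z$ could be removed from $W$ while dropping the rank), so $W=Z\in\mathcal{Z}$.
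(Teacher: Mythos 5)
Your proposal is correct, but note that the paper does not prove this theorem at all: it is quoted as a known result of Sims and of Bonin and de Mier \cite{cyclic flats}, so there is no in-paper proof to compare against. What you have written is essentially the standard argument from the cited reference: the rank formula $\widehat{r}(A)=\min_{Z\in\mathcal{Z}}\bigl(r(Z)+|A-Z|\bigr)$ is exactly the one used there. Your necessity direction is complete and correct; in particular the identity $r(X\cap Y)=r(X\wedge Y)+|(X\cap Y)-(X\wedge Y)|$ is justified properly by identifying $X\wedge Y$ with the cyclic part of the flat $X\cap Y$ (this is also the computation the present paper reuses in verifying (Z3) for the free $m$-cone). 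For the converse, the one step you flag as the main obstacle, submodularity of $\widehat{r}$, does go through along the lines you indicate: choosing minimizers $X$ for $A$ and $Y$ for $B$ and using $X\vee Y$, $X\wedge Y$ as test sets for $A\cup B$, $A\cap B$, the needed counting inequality is
\[
|(A\cup B)-(X\vee Y)|+|(A\cap B)-(X\wedge Y)|\;\leq\;|A-X|+|B-Y|+|(X\cap Y)-(X\wedge Y)|,
\]
which follows elementwise from $X\cup Y\subseteq X\vee Y$ together with $(A\cap B)-(X\wedge Y)\subseteq\bigl((A\cap B)-(X\cap Y)\bigr)\cup\bigl((X\cap Y)-(X\wedge Y)\bigr)$, after which (Z3) finishes the estimate. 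Your verification that each $Z\in\mathcal{Z}$ is a cyclic flat of the resulting matroid with $\widehat{r}(Z)=r(Z)$, and conversely that every cyclic flat is some $Z\in\mathcal{Z}$, is also sound; the reverse inequality $r(Z)\leq r(Z')+|Z-Z'|$ does follow from (Z2) applied to $Z\wedge Z'\subseteq Z$ combined with (Z3), as you claim. So the plan is a faithful reconstruction of the proof in the literature rather than an alternative to anything in this paper.
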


\section{The Free $m$-Cone}\label{section:free cone}

Let $M=(E,r)$ be a rank-$k$ loopless matroid.  For each integer
$m\geq 1$, we will define a rank-$(k+1)$ matroid $Q_m(M)$, which we
often shorten to $Q$, on a set of $(m+1)|E|+1$ elements. The
construction is illustrated in Figure \ref{fig:free cones in detail}.
For each $e\in E$, let $T_e$ be a set of size $m$ that is disjoint
from $E$ and from all other sets $T_{e'}$, and let $T$ be the union of
these $|E|$ sets.  Let $a$ be an element not in $E\cup T$, which we
call the \emph{tip} of $Q$.  Let $E(Q)=E\cup T\cup \{a\}$.

Define $q:2^{E}\to 2^{E(Q)}$ as follows: for $S\subseteq E$,
%$$q(S)=S\cup \{a\}\cup\{x\,:\,x\in T_e\text{ for some }e\in S\}.$$
$$q(S)=S\cup \{a\}\cup\Bigl(\bigcup_{e\in S}T_e\Bigr).$$
Thus, $q(\varnothing)=\{a\}$.  In the examples in Figure \ref{fig:free
  cones in detail}, if $e\in[6]$, then $q(\{e\})$ is the $3$-point
line $\{e,\overline{e},a\}$.  Define $p:2^{E(Q)}\to 2^{E}$ as follows:
for $S\subseteq E(Q)$,
$$p(S)=(S\cap E)\cup\{e:  S\cap T_e\ne \varnothing \}.$$
For $e\in E$ and $x\in T_e$, we have $p(\{e\})=\{e\}= p(\{x\})$.
Henceforth we omit the braces when applying $p$ and $q$ to singleton
sets.  The set $q(S)$ will be the ground set of the free $m$-cone of
$M|S$, so, for convenience, we refer to $q(S)$ as the \emph{cone of
  $S$}.

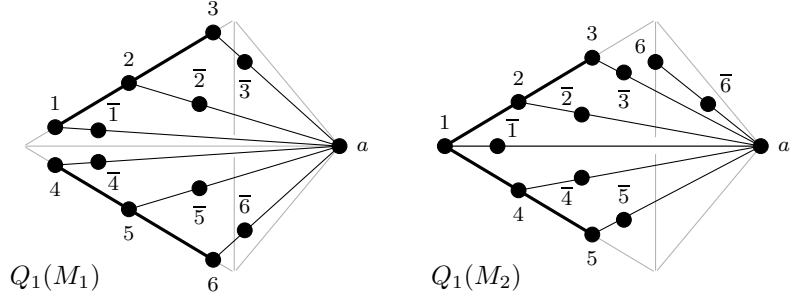
\begin{figure}[t]
  \centering
\begin{tikzpicture}[scale=1.4]

  \node[inner sep = 0] at (0, 1)   (a) {};
  \node[inner sep = 0] at (2, -0.2)   (b) {};
  \node[inner sep = 0] at (2, 2.2)   (c) {};
  \node[inner sep = 0] at (3, 1)   (d) {};

  \draw[black!30](b)--(c);
  \filldraw[white] (1.9,0.9) rectangle (2.1,1.1);
  \draw[black!30](a)--(b)--(d)--(c)--(a)--(d);

  \draw[very thick,black](0.3,1.18)--(1.8,2.08);
  \draw[very thick,black](0.3,0.82)--(1.8,-0.08);

   \draw[black](3,1)--(0.3,1.18);
   \draw[black](3,1)--(1,1.6);
   \draw[black](3,1)--(1.8,2.08);

   \draw[black](3,1)--(0.3,0.82);
   \draw[black](3,1)--(1,0.4);
   \draw[black](3,1)--(1.8,-0.08);  

 \filldraw (3,1) node[right=3pt] {\footnotesize $a$} circle  (2pt);

 \filldraw (0.3,1.18) node[above=3pt] {\footnotesize $1$} circle  (2pt);
 \filldraw (0.71,1.15) node[above right] {\footnotesize $\overline{1}$} circle  (2pt);

 \filldraw (1,1.6) node[above=3pt] {\footnotesize $2$} circle (2pt);
 \filldraw (1.67,1.4) node[above=3pt] {\footnotesize $\overline{2}$} circle (2pt);

 \filldraw (1.8,2.08) node[above=3pt] {\footnotesize $3$} circle  (2pt);
 \filldraw (2.1,1.8) node[below=3pt] {\footnotesize $\overline{3}$} circle (2pt);

 \filldraw (0.3,0.82) node[below=3pt] {\footnotesize $4$} circle  (2pt);
 \filldraw (0.71,0.85) node[below right] {\footnotesize $\overline{4}$} circle (2pt);

 \filldraw (1,0.4) node[below=3pt] {\footnotesize $5$} circle (2pt);
 \filldraw (1.67,0.6) node[below=3pt] {\footnotesize $\overline{5}$} circle (2pt);

 \filldraw (1.8,-0.08) node[below=3pt] {\footnotesize $6$} circle  (2pt);
 \filldraw (2.1,0.2) node[above =3pt] {\footnotesize $\overline{6}$} circle  (2pt);

      \node at (0.3,-0.25) {$Q_1(M_1)$};
 
 %%%%%%%%%%%%%

  \node[inner sep = 0] at (4, 1)   (aa) {};
  \node[inner sep = 0] at (6, -0.2)   (bb) {};
  \node[inner sep = 0] at (6, 2.2)   (cc) {};
  \node[inner sep = 0] at (7, 1)   (dd) {};

  \draw[black!30](bb)--(cc);
  \filldraw[white] (5.93,0.93) rectangle (6.08,1.08);
  \draw[black!30](aa)--(bb)--(dd)--(cc)--(aa)--(dd);

  \draw[very thick,black](4,1)--(5.4,1.84);
  \draw[very thick,black](4,1)--(5.4,0.16);

   \draw[black](7,1)--(4,1);
   \draw[black](7,1)--(4.7,1.42);
   \draw[black](7,1)--(5.4,1.84);

   \draw[black](7,1)--(5.4,0.16);
   \draw[black](7,1)--(4.7,0.58);
   \draw[black](7,1)--(6,1.8);  

  \filldraw (7,1) node[right=3pt] {\footnotesize $a$} circle  (2pt);

  \filldraw (4,1) node[above=3pt] {\footnotesize $1$} circle  (2pt);
  \filldraw (4.5,1) node[above right] {\footnotesize $\overline{1}$} circle  (2pt);

  \filldraw (4.7,1.42) node[above=3pt] {\footnotesize $2$} circle (2pt);
  \filldraw (5.3,1.3) node[above left] {\footnotesize $\overline{2}$} circle (2pt);

 \filldraw (5.4,1.84) node[above=3pt] {\footnotesize $3$} circle  (2pt);
 \filldraw (5.7,1.7) node[below=3pt] {\footnotesize $\overline{3}$} circle (2pt);

  \filldraw (4.7,0.58) node[below=3pt] {\footnotesize $4$} circle  (2pt);
  \filldraw (5.3,0.7) node[below left] {\footnotesize $\overline{4}$} circle (2pt);

  \filldraw (5.4,0.16) node[below=3pt] {\footnotesize $5$} circle (2pt);
  \filldraw (5.7,0.3) node[above=3pt] {\footnotesize $\overline{5}$} circle (2pt);

  \filldraw (6,1.8) node[above left] {\footnotesize $6$} circle  (2pt);
  \filldraw (6.5,1.4) node[above right =1pt] {\footnotesize $\overline{6}$} circle  (2pt);

        \node at (4.3,-0.25) {$Q_1(M_2)$};
 
\end{tikzpicture}
\caption{The free $1$-cones of the matroids $M_1$ and $M_2$, on
  $E=[6]$, in Figure \ref{fig:continuing examples}.  Each of $M_1$ and
  $M_2$ is shown in a face of a tetrahderon.  The tip $a$ is at the
  vertex opposite that face.  For each $e\in [6]$, the set $T_e$ is
  $\{\overline{e}\}$.  }\label{fig:free cones in detail}
\end{figure}

Let
$\mathcal{Z}(Q) = \mathcal{Z}(M)\cup\{q(F)\,:\, F\text{ is a nonempty
  flat of }M\}$.  In each example in Figure \ref{fig:free cones in
  detail}, besides the cyclic flats of the original matroid
($\varnothing$, two $3$-point lines, and $E$), this set contains the
six $3$-point lines that contain $a$, the cone of each $2$- and
$3$-point line of the original matroid (e.g.,
$\{3,\overline{3},6,\overline{6},a\}$ and
$\{1,\overline{1},2,\overline{2}, 3,\overline{3},a\}$), and $q(E)$.
Define $r_Q:\mathcal{Z}(Q)\to\mathbb{Z}$ by
\begin{equation}\label{eq:conerank}
  r_Q(Z) =
  \begin{cases}
    r(Z), & \text{ if } Z\in\mathcal{Z}(M)\\
    r(F)+1, & \text{ if } Z=q(F)\text{ where } F \text{ is a nonempty
      flat of } M.
  \end{cases}
\end{equation}	
We next show that $\mathcal{Z}(Q)$ and $r_Q$ define a matroid on
$E(Q)$.  We denote this matroid by $Q_m(M)$ and call it the \emph{free
  $m$-cone} of $M$.
	
\begin{theorem}
  Let $M=(E, r)$ be a loopless matroid.  The set $\mathcal{Z}(Q)$ and
  the function $r_Q$ defined above satisfy axioms \emph{(Z0)--(Z4)} in
  Theorem \emph{\ref{thm:cfaxioms}} and so define a matroid on $E(Q)$.
\end{theorem}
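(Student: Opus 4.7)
The plan is to verify axioms (Z0)--(Z3) in turn, by case analysis on whether each set involved lies in $\mathcal{Z}(M)$ or has the form $q(F)$ for some nonempty flat $F$ of $M$. The guiding observation is that every cone $q(F)$ contains the tip $a$ while no member of $\mathcal{Z}(M)$ does, so no cone is contained in any $\mathcal{Z}(M)$-set, and a set $Z\in\mathcal{Z}(M)$ lies in $q(F)$ if and only if $Z\subseteq F$. Using this, I would describe meets and joins in $(\mathcal{Z}(Q),\subseteq)$ directly to establish (Z0): for $Z_1,Z_2\in\mathcal{Z}(M)$ the meet and join in $\mathcal{Z}(Q)$ coincide with those in $\mathcal{Z}(M)$; for cones $q(F_1)$ and $q(F_2)$ one checks $q(F_1)\cap q(F_2)=q(F_1\cap F_2)$ when $F_1\cap F_2\ne\varnothing$ and $\{a\}$ otherwise, so the meet is $q(F_1\cap F_2)$ in the first case and $\varnothing$ (available since $M$ is loopless) in the second, while the join is $q(\cl_M(F_1\cup F_2))$; for a mixed pair $Z\in\mathcal{Z}(M)$ and $q(F)$, the join is $q(\cl_M(Z\cup F))$ and the meet is the largest cyclic flat of $M$ contained in the flat $Z\cap F$. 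Axiom (Z1) is then immediate since $\varnothing\in\mathcal{Z}(M)\subseteq\mathcal{Z}(Q)$ is the least element and $r_Q(\varnothing)=r(\varnothing)=0$.

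For (Z2), fix $X\subsetneq Y$ in $\mathcal{Z}(Q)$. If both lie in $\mathcal{Z}(M)$, the conclusion is (Z2) for $M$. If $X=q(F_1)\subsetneq Y=q(F_2)$, then $F_1\subsetneq F_2$ and $r_Q(Y)-r_Q(X)=r(F_2)-r(F_1)$, which is positive because flat containment forces strict rank increase, and is bounded above by $|F_2-F_1|<(m+1)|F_2-F_1|=|Y-X|$ since $m\ge 1$. If $X\in\mathcal{Z}(M)$ and $Y=q(F)$ (so $X\subseteq F$), then $r_Q(Y)-r_Q(X)=r(F)-r(X)+1\ge 1$, while
$$|Y-X|-(r_Q(Y)-r_Q(X))=\bigl(|F-X|-(r(F)-r(X))\bigr)+m|F|\ge m|F|\ge 1,$$
which gives the strict upper bound.

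The substance of the argument is (Z3). For $X,Y\in\mathcal{Z}(M)$ this is (Z3) for $M$. For two cones $q(F_1), q(F_2)$, plugging the meets and joins above into (\ref{ineq:submcyc}) and using (\ref{eq:conerank}) makes both sub-cases (according to whether $F_1\cap F_2$ is empty) collapse to ordinary rank submodularity $r(F_1\cup F_2)+r(F_1\cap F_2)\le r(F_1)+r(F_2)$. For the mixed case with $X\in\mathcal{Z}(M)$, $Y=q(F)$, and meet $Z$, the key identity is
$$r(X\cap F)=r(Z)+|(X\cap F)-Z|,$$
which holds because $Z$ is obtained from the flat $X\cap F$ by iteratively removing coloops of the restricted matroid, each step decreasing both rank and size by one. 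Substituting this identity collapses $r_Q(X\wedge Y)+|(X\cap Y)-(X\wedge Y)|$ to $r(X\cap F)$, after which (Z3) reduces to submodularity $r(X\cup F)+r(X\cap F)\le r(X)+r(F)$. I expect this mixed case to be the main obstacle: it is the one place where the tip $a$ and the parallel classes $T_e$ interact with a nontrivial cyclic-core discrepancy, and the precise matching between the coloop-peeling identity and the defect $|(X\cap Y)-(X\wedge Y)|$ is what closes the axioms up cleanly.
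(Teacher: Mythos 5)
Your proposal is correct and follows essentially the same route as the paper's proof: the same three-way case analysis (both sets in $\mathcal{Z}(M)$, both cones, mixed), the same identification of meets and joins in $\mathcal{Z}(Q)$, the same coloop-removal identity $r(X\cap F)=r(Z)+|(X\cap F)-Z|$ for the mixed meet, and the same reduction of (Z3) to submodularity of $r$. The only (cosmetic) difference is in the mixed case of (Z2), where you compute the slack $|Y-X|-(r_Q(Y)-r_Q(X))$ directly rather than chaining through the intermediate cone $q(X)$ as the paper does.
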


\begin{proof}
  Since $\varnothing\in \mathcal{Z}(M)$, the least set in
  $\mathcal{Z}(Q)$ is $\varnothing$, so property (Z1) holds by
  Equation (\ref{eq:conerank}).  Each of properties (Z0), (Z2), and
  (Z3) involves two sets in $\mathcal{Z}(Q)$.  When both sets are in
  $\mathcal{Z}(M)$, the properties hold since $M$ is a matroid and
  $r_Q(X)=r(X)$ for such sets, so we focus on the case having at least
  one set not in $\mathcal{Z}(M)$.  We use $\vee$ and $\wedge$ for the
  join and meet operations of $\mathcal{Z}(Q)$, and $\vee_M$ and
  $\wedge_M$ for those of $\mathcal{Z}(M)$. We use $\cl$ for the
  closure operator of $M$.
  
  Let $X$ and $Y$ be distinct sets in $\mathcal{Z}(Q)-\mathcal{Z}(M)$,
  so $X=q(X')$ and $Y=q(Y')$ where $X'$ and $Y'$ are nonempty flats of
  $M$.  Now $X\cap Y = q(X'\cap Y')$, which is in $\mathcal{Z}(Q)$
  unless the flat $X'\cap Y'$ of $M$ is empty.  So
  $X\wedge Y= q(X'\cap Y')$ unless $X'\cap Y'=\varnothing$, in which
  case $X\wedge Y= \varnothing$.  Since $X\vee Y$ can only be the cone
  of a flat of $M$, and the least flat of $M$ that contains $X'$ and
  $Y'$ is $\cl(X'\cup Y')$, we have $X\vee Y=q(\cl(X'\cup Y'))$.  For
  property (Z3), submodularity in $M$ gives
  $r(\cl(X'\cup Y'))+r(X'\cap Y')\leq r(X')+r(Y')$.  If
  $X'\cap Y'\ne\varnothing$, then $(X\cap Y)-(X\wedge Y)=\varnothing$,
  so Inequality (\ref{ineq:submcyc}) follows by Equation
  (\ref{eq:conerank}).  If $X'\cap Y'=\varnothing$, then
  $X\cap Y =\{a\}$, $X'\wedge Y'=\varnothing$, and
  $r(\cl(X'\cup Y'))\leq r(X')+r(Y')$, and Inequality
  (\ref{ineq:submcyc}) follows in this case too by Equation
  (\ref{eq:conerank}).  For property (Z2), we assume, in addition,
  that $X\subsetneq Y$.  Thus, $X'\subsetneq Y'$.  Equation
  (\ref{eq:conerank}) gives $r_Q(Y)-r_Q(X)=r(Y')-r(X')>0$. Since
  $|Y-X|=(m+1)|Y'-X'|$, property (Z2) follows.
 
  Now assume that $X\in \mathcal{Z}(M)$ and
  $Y \in \mathcal{Z}(Q)-\mathcal{Z}(M)$; let $Y=q(Y')$ where $Y'$ is a
  nonempty flat of $M$.  Now $X\cap Y=X\cap Y'$, which, as an
  intersection of flats of $M$, is a flat of $M$.  Let $C$ be the set
  of coloops of $M|X\cap Y$.  Now $(X\cap Y)-C$ is a cyclic flat of
  $M$, and it contains each cyclic flat that is a subset of $X$ and
  $Y$, so it is $X\wedge Y$.  Also,
  $$r_Q(X\wedge Y)=r(X\cap Y)-|C|=r(X\cap Y)-|(X\cap Y)-(X\wedge
  Y)|.$$ Since $X\vee Y$ can only be the cone of a flat of $M$, and
  the least flat of $M$ that contains $X$ and $Y'$ is $\cl(X\cup Y')$,
  we have $X\vee Y=q(\cl(X\cup Y'))$.  This completes the proof of
  property (Z0).  To finish the proof of property (Z3), Inequality
  (\ref{ineq:submcyc}) follows since
  \begin{align*}
    \,r_Q(X\vee Y)+r_Q(X\wedge Y)+|(X\cap Y) -(X\wedge Y)|= 
    &\, r(X\cup Y')+1+r(X\cap Y')\\
    \leq&\, r(X)+r(Y')+1\\
    =&\,r_Q(X)+r_Q(Y).
  \end{align*}
  For property (Z2), assume, in addition, that $X\subsetneq Y$.
  Property (Z2) holds if $Y=q(X)$ since
  $r_Q(q(X))-r_Q(X)=1<1+m|X|=|q(X)-X|$.  If $Y\ne q(X)$, then property
  (Z2) follows from this and the inequality
  $0<r_Q(Y)-r_Q(q(X))<|Y-q(X)|$ proven in the previous paragraph.\qed
\end{proof}

This result justifies extending $r_Q$, as defined in equation
(\ref{eq:conerank}), so that $r_Q$ is the rank function of the matroid
$Q_m(M)$.

If $M$ is binary, graphic, or transversal, $Q_m(M)$ might not share
these properties.  However, if $M$ is representable over $\mathbb{R}$,
then so is $Q_m(M)$, and if $M$ is representable over some field of
characteristic $p$, then so is $Q_m(M)$, although not necessarily over
the same field of characteristic $p$.  This holds since another way to
define $Q_m(M)$ (less suited to our purposes) is via iterated
principal extensions of the direct sum of $M$ and the rank-$1$ matroid
on the tip, adding points freely to the lines $\cl_M(\{a,e\})$ for
$e\in E(M)$.

Note that in the examples in Figure \ref{fig:free cones in detail},
and their generalization for any $m\geq 1$, each cyclic line in
$Q_m(M_1)$ is in a cyclic plane that contains four cyclic lines, but
that fails for $Q_m(M_2)$, so the configurations differ.  Our results
below imply that $\mathcal{G}(Q_m(M_1))$ and $\mathcal{G}(Q_m(M_2))$
are equal.  So, if $m\geq 1$, then $Q_m(M_1)$ and $Q_m(M_2)$ have the
same $\mathcal{G}$-invariant and different configurations.  Our main
theorem generalizes this example.

\begin{theorem}\label{thm:main result}
  Let $M$ and $N$ be nonisomorphic loopless matroids with
  $\mathcal{G}(M)=\mathcal{G}(N)$.  For all $m\geq 1$, $Q_m(M)$ and
  $Q_m(N)$ have the same $\mathcal{G}$-invariant and different
  configurations.
\end{theorem}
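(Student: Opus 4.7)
The plan is to deduce Theorem \ref{thm:main result} from two intermediate results that I would establish earlier in Section \ref{section:free cone}: (A) that $\mathcal{G}(Q_m(M))$ is determined by $\mathcal{G}(M)$, and (B) that the configuration of $Q_m(M)$ determines $M$ up to isomorphism. Granted these, the theorem is immediate: from $\mathcal{G}(M)=\mathcal{G}(N)$ and (A), we get $\mathcal{G}(Q_m(M))=\mathcal{G}(Q_m(N))$; and if the two free cones had the same configuration, then (B) would force $M\cong N$, contradicting the hypothesis.

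For (A), I would pass to the catenary-data formulation provided by Theorem \ref{thm:catdata}: it suffices to show that each $\nu(Q_m(M);(b_i))$ can be written as a linear combination of the numbers $\nu(M;(a_i))$ with coefficients depending only on $m$, $(a_i)$, and $(b_i)$, not on $M$ itself. The approach is to first describe all flats of $Q_m(M)$ (not just the cyclic ones) in terms of flats of $M$ and subsets of the fibers $T_e$; then to stratify flags of $Q_m(M)$ by the position at which the tip $a$ first appears and by how the elements of $T$ are distributed among the blocks; and finally to project the ``$M$-part'' of each flag via $p$ to a flag of $M$. Each fiber of this projection should be counted by an explicit product of multinomial coefficients in $(a_i)$, $(b_i)$, and $m$, and summing over fibers yields the desired universal formula; Theorem \ref{thm:catdata} then delivers (A).

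For (B), the key observation is that the sizes of cyclic flats of $Q_m(M)$ split cleanly modulo $m+1$: if $Z\in\mathcal{Z}(M)$ then $|Z|\equiv 0\pmod{m+1}$, while if $Z=q(F)$ for a nonempty flat $F$ of $M$ then $|Z|=(m+1)|F|+1\equiv 1\pmod{m+1}$. Since $m\geq 1$, these residues differ, so the configuration identifies the cone elements using its size labels alone. By the join and meet computations already carried out in showing that $\mathcal{Z}(Q)$ defines a matroid, the cone elements together with the bottom $\varnothing$ form a sublattice isomorphic to the lattice of flats of $M$, and inverting $s(q(F))=(m+1)|F|+1$ and $\rho(q(F))=r(F)+1$ recovers $|F|$ and $r(F)$ for every flat $F$ of $M$. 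Since a loopless matroid is determined up to isomorphism by its lattice of flats together with the sizes of its rank-$1$ flats, this recovers $M$ from the configuration of $Q_m(M)$.

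The main obstacle is (A). Although cones of flats of $M$ are the ``obvious'' flats of $Q_m(M)$, a general flat of $Q_m(M)$ need not be a cone, so a careful classification of flats is needed before the multinomial-coefficient bookkeeping in the fiber count can be carried out cleanly and shown to give coefficients that depend only on $m$ and the compositions involved, rather than on the underlying matroid.
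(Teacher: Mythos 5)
Your overall structure---deducing the theorem from (A) ``$\mathcal{G}(M)$ determines $\mathcal{G}(Q_m(M))$'' and (B) ``the configuration of $Q_m(M)$ determines $M$''---is exactly the paper's, and your sketch of (A) (classify the flats of $Q_m(M)$, stratify flags by where the tip first appears and how the elements of $T$ are interleaved, then count the fibers of the projection to flags of $M$) is in substance the paper's bijection between flags of $Q$ and $4$-tuples $\bigl((X_i),h,C,(e_j)\bigr)$, with fiber count $\prod_j m\cdot a_{h-|C|+j}$. That part is fine as an outline.

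The gap is in (B). Your claim that the sizes of cyclic flats ``split cleanly modulo $m+1$'' is false: the cyclic flats of $Q_m(M)$ coming from $\mathcal{Z}(M)$ are the cyclic flats of $M$ \emph{themselves}, as subsets of $E$ --- they are not fattened by the fibers $T_e$ --- so their sizes are arbitrary, not multiples of $m+1$. Concretely, in the paper's running example $Q_1(M_1)$, the cyclic flat $\{1,2,3\}\in\mathcal{Z}(M_1)$ has size $3$ and rank $2$, while the cone $q(\{1\})=\{1,\overline{1},a\}$ also has size $3$ and rank $2$; the two are indistinguishable by the size and rank labels in the configuration, so your residue test cannot identify the cone elements, and the rest of your argument for (B) (restricting to the sublattice of cones and inverting $s(q(F))=(m+1)|F|+1$) has nothing to stand on. Identifying which elements of the abstract lattice are cones is the genuinely hard point here; the paper does it by characterizing the family $\mathcal{L}$ of lines through the tip as the unique maximum-size family of cyclic lines satisfying three lattice-theoretic conditions (pairwise coplanarity, joint join equal to the top, and each containing at most one proper nonempty cyclic flat), which requires a counting argument to rule out competing families containing lines of $M$. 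Some such argument is needed; your shortcut does not work.
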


This result can be iterated, producing $Q_{m_1}(Q_m(M))$ and
$Q_{m_1}(Q_m(N))$, and so on.

We prove Theorem \ref{thm:main result} in two parts: in Theorem
\ref{thm:config}, we show that the configuration of $Q_m(M)$
determines $M$; in Theorem \ref{thm:G-inv}, we show that
$\mathcal{G}(M)$ determines $\mathcal{G}(Q_m(M))$.  We first treat
some preliminary results that enter into the proofs of both theorems.
By Theorem \ref{thm:catdata}, $\mathcal{G}(M)$ is equivalent to the
catenary data of $M$. We will show that the catenary data of $M$
determines that of $Q_m(M)$. To do so, we use the next few results to
characterize the flats of $Q_m(M)$, which then allows us to
characterize its flags.

\begin{lemma}\label{lemma:coloops}
  Fix $m\geq 1$.  Let $M=(E,r)$ be a loopless matroid and let $Q=Q_m(M)$.
  \begin{itemize}
  \item[\emph{(i)}] If $F$ is a flat of $Q$ and $a\notin F$, then all
    elements of $F-E$ are coloops of $Q|F$.
  \item[\emph{(ii)}] All flats of $M$ are flats of $Q$ and the
    restriction $Q|E$ is $M$.
  \end{itemize}
\end{lemma}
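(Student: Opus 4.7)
The plan is to extract enough information about the closure operator of $Q$ from the cyclic-flat description to handle both parts. Throughout I will use the standard identity $r_Q(A) = \min_{Z \in \mathcal{Z}(Q)}[r_Q(Z) + |A \setminus Z|]$, which is a consequence of the axioms (Z0)--(Z3) for $(\mathcal{Z}(Q), r_Q)$ (one achieves the minimum by taking $Z$ to be $\cl_Q(A)$ with its coloops in $Q|\cl_Q(A)$ removed).

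For (ii), I first check that $Q|E = M$. For $A \subseteq E$, each $Z \in \mathcal{Z}(M)$ contributes $r_M(Z) + |A \setminus Z|$ in the identity, and the minimum of these bounds over $\mathcal{Z}(M)$ alone equals $r_M(A)$ (by applying the identity inside $M$). Each $Z = q(F')$ for a nonempty flat $F'$ of $M$ contributes $r_M(F') + 1 + |A \setminus F'|$, which is at least $r_M(A) + 1$ via the standard bound $r_M(A) \leq r_M(F') + |A \setminus F'|$; so the overall minimum is $r_M(A)$. To show that a flat $F$ of $M$ is a flat of $Q$, I take $y \notin F$ and run the same bookkeeping to obtain $r_Q(F \cup \{y\}) \geq r_M(F) + 1 > r_Q(F)$, splitting on whether $y$ lies in $E \setminus F$, equals $a$, or lies in some $T_e$; in every case each cyclic flat of $Q$ gives a bound of at least $r_M(F) + 1$.

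The heart of (i) is the following property of the lines through the tip: for $y \in T_e$ and $S \subseteq E(Q) \setminus \{y\}$, if $y \in \cl_Q(S)$ then $\{a, e\} \subseteq \cl_Q(S)$. Granting this, (i) is immediate: if $F$ is a flat of $Q$ with $a \notin F$ and $y \in F \cap T_e$, then assuming $y \in \cl_Q(F \setminus \{y\})$ would force $a \in \cl_Q(F \setminus \{y\}) \subseteq F$, contradicting $a \notin F$; hence $y$ is a coloop of $Q|F$.

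The main obstacle is proving this free-addition property. From $y \in \cl_Q(S)$, i.e., $r_Q(S \cup \{y\}) = r_Q(S)$, a short case analysis comparing the minimum over cyclic flats containing $y$ with the minimum over those not containing $y$ shows that the minimum for $r_Q(S)$ must be attained by some cyclic flat $Z$ with $y \in Z$. No member of $\mathcal{Z}(M)$ contains $y \in T_e$, so $Z = q(F')$ for some nonempty flat $F'$ of $M$ with $e \in F'$, and then $\{a, e\} \subseteq Z$. Substituting this $Z$ into the identity for $r_Q(S \cup \{a\})$ and for $r_Q(S \cup \{e\})$ produces upper bounds equal to $r_Q(S)$, which forces $a, e \in \cl_Q(S)$ and completes the argument.
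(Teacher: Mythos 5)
Your proof is correct, but it runs along a genuinely different track from the paper's. The paper proves (i) with a one-line circuit argument: any circuit $C\not\subseteq E$ has $\cl_Q(C)$ a cyclic flat not contained in $E$, hence of the form $q(F')$ and so containing $a$; thus a flat avoiding $a$ has no circuits meeting $T$. For (ii) the paper first shows $E$ is a flat of $Q$ by comparing ranks of $E$ and $E$ minus its coloops, obtains the flats of $M$ as traces $q(F)\cap E$ of flats of $Q$, and finishes with a minimal-rank-counterexample argument to get $Q|E=M$. You instead make the cyclic-flat rank formula $r_Q(A)=\min_{Z\in\mathcal{Z}(Q)}\bigl(r_Q(Z)+|A-Z|\bigr)$ do all the work: for (ii) a direct term-by-term comparison shows the cones $q(F')$ never beat the members of $\mathcal{Z}(M)$, giving $r_Q|_{2^E}=r_M$ in one stroke, and a similar bound shows flats of $M$ stay closed in $Q$; for (i) you prove the ``free addition'' property that $y\in T_e$ enters $\cl_Q(S)$ only together with $a$ and $e$, by observing that the minimum for $r_Q(S)$ must then be attained at a cyclic flat containing $y$, which is necessarily a cone. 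I checked the details (in particular the comparison of the minima over cyclic flats containing versus avoiding $y$, and the identification $A\setminus q(F')=A\setminus F'$ for $A\subseteq E$) and they hold. Your route is more uniform and yields the stronger statement $r_Q(A)=r_M(A)$ for all $A\subseteq E$ immediately, at the cost of invoking the rank formula from the cyclic-flat axiomatization (which is legitimate background from Bonin--de Mier, though the paper never states it); the paper's route is shorter for (i) and uses only elementary facts about circuits and flats.
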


\begin{proof}
  For any circuit $C$ with $C\not\subseteq E$, its closure $\cl_Q(C)$
  is a cyclic flat and $\cl_Q(C)\not\subseteq E$, so $a\in\cl_Q(C)$.
  So for any flat $F$ of $Q$ with $a\notin F$, no circuit of $Q|F$
  contains elements of $F-E$, so statement (i) follows.
  
  If $e\in E$ and $x\in T_e$, then
  $a\in q(e)\subseteq \cl_Q(\{x,e\})$, so the only flat of $Q$ that
  properly contains $E$ is $E(Q)$.  Let $Y$ be the (perhaps empty) set
  of coloops of $M$, so $E-Y$ is in $\mathcal{Z}(M)$ and so in
  $\mathcal{Z}(Q)$.  Thus, $r(E)=r(E-Y)+|Y|=r_Q(E-Y)+|Y|\geq r_Q(E)$.
  Since $\cl_Q(E\cup a)=E(Q)$ and $Q$ has rank $r(E)+1$, it follows
  that $r(E)=r_Q(E)$ and that $E$ is a flat of $Q$.  If $F$ is a flat
  of $M$, then $F=q(F)\cap E$ and, as an intersection of flats of $Q$,
  it is a flat of $Q$, so the first part of statement (ii) holds.  By
  considering chains of flats, we now get $r(F)=r_Q(F)$ for all flats
  $F$ of $M$.
  
  If the second part failed, then there would be a flat $X$ of $Q|E$
  of minimal rank, say $i$, that is not a flat of $M$.  Now $i>0$.
  Let $Y$ be a flat of rank $i-1$ with $Y\subsetneq X$. Fix $x$ in
  $X-Y$.  Only one flat of rank $i$ in $Q$ contains $Y\cup x$, and
  $\cl_M(Y\cup x)$ has this property, so $X=\cl_M(Y\cup x)$, contrary
  to $X$ not being a flat of $M$.  This completes the proof.\qed
\end{proof}

Having $r(X)=r_Q(X)$ for $X\subseteq E$ allows us to simplify the
notation by using $r$ for the rank function of $Q$.

The next lemma is useful for describing the flats of $Q$.  While we
will apply it to the free $m$-cone of a matroid, the lemma holds for
any matroid.

\begin{lemma}\label{lemma:lines}
  Let $L_1,L_2,\ldots,L_j$ be distinct lines in a matroid $N$, each
  containing the rank-$1$ flat $\{a\}$.  Fix sets
  $X=\{x_1,x_2,\ldots,x_j\}$ and $Y=\{y_1,y_2,\ldots,y_j\}$ where
  $x_i,y_i \in L_i-a$ for each $i\in [j]$.  Then $X\cup a$ is
  independent if and only if $Y\cup a$ is independent.  Thus, if $F$
  is a flat of $N$ with $a\not\in F$ and $X\cup Y\subseteq F$, then
  $X$ is independent if and only if $Y$ is independent.
\end{lemma}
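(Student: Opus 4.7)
The plan is to exploit the fact that any two points on a line through $a$ together with $a$ span that line. Concretely, for each $i\in[j]$, since $L_i$ is a rank-$2$ flat containing the distinct elements $a$, $x_i$, and $y_i$, we have $\cl(\{a,x_i\})=L_i=\cl(\{a,y_i\})$. Taking unions, this gives
\[
\cl(X\cup a)=\cl(L_1\cup L_2\cup\cdots\cup L_j)=\cl(Y\cup a),
\]
so $X\cup a$ and $Y\cup a$ span the same flat, call it $G$. Both of these sets have size exactly $j+1$, since $x_1,\ldots,x_j$ are distinct (and all distinct from $a$), and similarly for $y_1,\ldots,y_j$. Therefore $X\cup a$ is independent iff $r(G)=j+1$ iff $Y\cup a$ is independent, which gives the first assertion.

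For the second assertion, let $F$ be a flat with $a\notin F$ and $X\cup Y\subseteq F$. The key observation is that $a\notin\cl(X)$: since $X\subseteq F$ and $F$ is a flat, $\cl(X)\subseteq F$, but $a\notin F$. Hence $r(X\cup a)=r(X)+1$, and similarly $r(Y\cup a)=r(Y)+1$. Combining, $X$ is independent iff $r(X)=j$ iff $r(X\cup a)=j+1$ iff $X\cup a$ is independent; by the first part, this holds iff $Y\cup a$ is independent, which by the analogous computation is equivalent to $Y$ being independent.

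I do not foresee any real obstacle here: the only thing to verify carefully is that the $x_i$ (and $y_i$) are genuinely distinct and distinct from $a$, which is built into the hypothesis $x_i\in L_i-a$ and the fact that $X$ and $Y$ are written as $j$-element sets. Everything else is a direct application of closure and the submodular rank inequality in the trivial form $r(S\cup a)\in\{r(S),r(S)+1\}$.
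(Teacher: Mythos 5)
Your proof is correct and follows essentially the same route as the paper's: both rest on the observation that $X\cup a$ and $Y\cup a$ have the same size and the same closure, namely $\cl(L_1\cup\cdots\cup L_j)$, so one is independent (equivalently, a basis of that flat) iff the other is. Your explicit treatment of the second assertion via $a\notin\cl(X)$ is a detail the paper leaves implicit, but it is the intended argument.
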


\begin{proof}
  Let $F=\cl(L_1\cup L_2\cup \cdots \cup L_j)$.  If $X\cup a$ is
  independent, then it is a basis of $N|F$.  Since
  $F\subseteq \cl(Y\cup a)$ and $|X\cup a|=|Y\cup a|$, it follows that
  $Y\cup a$ is also a basis of $N|F$, and so is independent.  The
  result follows by symmetry.\qed
\end{proof}

\begin{corollary}\label{cor:rank}
  Let $Q=Q_m(M)=(E(Q), r)$ for some $m\geq 1$. For $S\subseteq E(Q)$,
  \begin{equation}\label{eqn:rankofprojection}
    r(p(S)) =
    \begin{cases}
      r(S), & \text{if } a\not\in \cl(S),\\
      r(S)-1, & \text{if } a\in \cl(S).
    \end{cases}
  \end{equation}
\end{corollary}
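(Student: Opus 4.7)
The plan is to reduce both sides of the desired equality to a common quantity, namely $r(S\cup\{a\})$, by establishing the closure identity $\cl(S\cup\{a\})=\cl(p(S)\cup\{a\})$. The whole proof then hinges on the single geometric fact that, for every $e\in E$, the set $\{e\}\cup T_e\cup\{a\}$ lies in the line $q(e)=\cl(\{e,a\})$.

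The first step is to verify this closure identity by a mutual-containment argument. For the containment $S\subseteq\cl(p(S)\cup\{a\})$, I would check that if $x\in S\cap E$ then $x\in p(S)$ directly, while if $x\in S\cap T_e$ for some $e$, then $e\in p(S)$ by definition of $p$, and $x\in q(e)=\cl(\{e,a\})\subseteq\cl(p(S)\cup\{a\})$. Symmetrically, for $p(S)\subseteq\cl(S\cup\{a\})$, an element $e\in p(S)$ is either in $S\cap E$ or is witnessed by some $x\in S\cap T_e$, in which case $e\in q(e)=\cl(\{x,a\})\subseteq\cl(S\cup\{a\})$. Taking closures on both sides yields $\cl(S\cup\{a\})=\cl(p(S)\cup\{a\})$, and hence $r(S\cup\{a\})=r(p(S)\cup\{a\})$.

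The next step is to note that $p(S)\subseteq E$, and by Lemma \ref{lemma:coloops}(ii) the set $E$ is a flat of $Q$ not containing $a$, so $\cl(p(S))\subseteq E$ and therefore $a\notin\cl(p(S))$. This gives $r(p(S)\cup\{a\})=r(p(S))+1$ unconditionally.

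Finally, I would split on whether $a\in\cl(S)$. If $a\in\cl(S)$, then $r(S\cup\{a\})=r(S)$, so combining with the two displayed identities yields $r(S)=r(p(S))+1$, i.e.\ $r(p(S))=r(S)-1$. If instead $a\notin\cl(S)$, then $r(S\cup\{a\})=r(S)+1$, and the same chain gives $r(S)+1=r(p(S))+1$, i.e.\ $r(p(S))=r(S)$. There is no real obstacle: the only mildly subtle point is noticing that one should enlarge by $\{a\}$ on both sides rather than trying to compare $\cl(S)$ and $\cl(p(S))$ directly, since in the case $a\notin\cl(S)$ an element $e\in p(S)$ with $e\notin S$ need not lie in $\cl(S)$.
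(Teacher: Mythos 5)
Your proof is correct, but it takes a genuinely different route from the paper's. The paper derives the corollary from Lemma \ref{lemma:lines}: when $a\notin\cl(S)$ it transfers independence between subsets $U\subseteq S$ and their projections $p(U)$, and when $a\in\cl(S)$ it picks a basis $B$ of $Q|(S\cup a)$ containing $a$ and projects $B-a$, pairing this with the observation that $p(S)\subseteq\cl(S)$ while $a\in\cl(S)-\cl(p(S))$. You instead bypass all independence bookkeeping and argue purely with closures: the identity $\cl(S\cup\{a\})=\cl(p(S)\cup\{a\})$ (which follows from the collinearity of $e$, $T_e$, and $a$) together with the fact that $E$ is a flat of $Q$ avoiding $a$ (Lemma \ref{lemma:coloops}(ii)), so that $r(p(S)\cup\{a\})=r(p(S))+1$ unconditionally; the two cases then fall out of whether adjoining $a$ to $S$ raises the rank. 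Your version is more self-contained and arguably more conceptual, handling both cases uniformly; the paper's version is shorter in context because Lemma \ref{lemma:lines} is already needed for Lemma \ref{lemma:flats} and Theorem \ref{thm:higgs}, so the corollary comes nearly for free once that lemma is in hand. One small imprecision worth fixing: you write $q(e)=\cl(\{e,a\})$, but if $e$ has parallel elements in $M$ then $\cl_Q(\{e,a\})=q(\cl_M(\{e\}))$ properly contains $q(\{e\})$; the containments you actually use, namely $T_e\subseteq\cl(\{e,a\})$ and $e\in\cl(\{x,a\})$ for $x\in T_e$, remain valid, so the argument is unaffected.
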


\begin{proof}
  Assume that $a\notin\cl(S)$.  No two elements of $S$ are colinear
  with $a$, so by Lemma \ref{lemma:lines}, a subset $U$ of $S$ is
  independent if and only if $p(U)$ is independent.  Therefore
  $r(S)=r(p(S))$.  Now assume that $a\in\cl(S)$.  Let $B$ be a basis
  of $Q|S\cup a$ with $a\in B$.  So $p(B-a)$ is independent by Lemma
  \ref{lemma:lines}, so $r(p(S))\geq r(S)-1$.  Now $r(p(S))<r(S)$
  since $p(S)\subseteq \cl(S)$ and $a\in\cl(S)- \cl(p(S))$.  Thus,
  $r(p(S))= r(S)-1$.\qed
\end{proof}

The next lemma characterizes the flats of $Q$.  Recall that
$T=E(Q)-(E\cup a)$ is the set of elements of $Q$ that are neither the
tip, $a$, nor in $M$.

 \begin{lemma}\label{lemma:flats}
   Let $Q=Q_m(M)$ for some $m\geq 1$. The flats $F$ of $Q$ that
   contain $a$ are the cones of flats in $M$; indeed, $F=q(F\cap
   E)$. If $a\not\in F$, then $F$ is a flat of $Q$ if and only if
   \begin{enumerate}
   \item[\emph{(1)}] $F\cap E$ is a flat of $M$,
   \item[\emph{(2)}] $p(x)\neq p(y)$ for all $x, y\in F$ with
     $x\neq y$, and
   \item[\emph{(3)}] for each basis $B$ of $M|F\cap E$, the set
     $B\cup p(F\cap T)$ is independent.
   \end{enumerate}
 \end{lemma}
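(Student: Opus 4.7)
The proof splits on whether $a \in F$.

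Case $a \in F$: I would show $F = q(F \cap E)$ by using that $q(e) = \cl(\{a, e\})$ for each $e \in F \cap E$ (forcing $q(e) \subseteq F$), and that any $x \in F \cap T_{e'}$ has $\cl(\{a, x\}) = q(e')$ (forcing $e' \in F$). Then $F \cap E$ is a flat of $M$ by Lemma \ref{lemma:coloops}(ii), since $\cl_M(F \cap E) = \cl(F \cap E) \cap E \subseteq \cl(F) \cap E = F \cap E$.

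Case $a \notin F$: In the forward direction, (1) follows from the same $Q|E = M$ identity, and (2) holds because two distinct elements of $F$ sharing a $p$-image would span a common line $q(e)$ through $a$, pulling $a$ into $\cl(F) = F$. Both directions of (3) reduce to the rank identity $r(F) = r_M(F \cap E) + |F \cap T|$, which I would establish using the principal-extension description of $Q_m(M)$ noted after the construction: each $x \in T_e$ is a free point on $q(e) = \cl(\{a,e\})$, so $x \in \cl(S)$ (for $x \notin S$) iff $\{a, e\} \subseteq \cl(S)$. In the forward direction, I would adjoin the elements of $F \cap T$ to $F \cap E$ one at a time; each partial set $S$ satisfies $\cl(S) \subseteq \cl(F) = F$ and hence misses $a$, so the free-point property forces a strict rank increase at every step. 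Combining with $r(F \cap E) = r_M(F \cap E)$ from Lemma \ref{lemma:coloops}(ii) yields the identity, and Corollary \ref{cor:rank} together with $\cl_M(B) = F \cap E$ then gives $r_M(B \cup p(F \cap T)) = r(F) = |B| + |p(F \cap T)|$, so $B \cup p(F \cap T)$ is independent in $M$.

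For the reverse direction, assuming (1), (2), (3), I would first note that $U = B \cup (F \cap T) \subseteq F$ has $p(U) = B \cup p(F \cap T)$ independent in $M$ by (3), and $|U| = |p(U)|$ by (2); Corollary \ref{cor:rank} then forces $r(U) = |U|$ with $a \notin \cl(U)$. Since $(F \cap E) \setminus B \subseteq \cl_M(B) \subseteq \cl(U)$, we get $F \subseteq \cl(U)$, so $r(F) = |U| = r_M(F \cap E) + |F \cap T|$ and $a \notin \cl(F)$. To conclude $\cl(F) = F$, I check $y \notin \cl(F)$ for each $y \in E(Q) \setminus F$: the case $y = a$ is just handled; for $y \in T_{e'} \setminus F$, the free-point property combined with $a \notin \cl(F)$ gives $y \notin \cl(F)$; for $y \in E \setminus F$, the sub-cases $y \in p(F \cap T)$ and $y \notin \cl_M(p(F))$ are short rank computations via Corollary \ref{cor:rank}, while the sub-case $y \in \cl_M(p(F)) \setminus p(F)$ is handled by choosing a circuit $C$ of $M$ with $y \in C \subseteq \{y\} \cup B \cup p(F \cap T)$, lifting it via Lemma \ref{lemma:lines} to $C'' \subseteq \{y\} \cup U$, and applying the free-point property to any lifted $x_e \in C''$ (which must exist since $y \notin \cl_M(B)$) to force $a \in \cl(C'') \subseteq \cl(F \cup y)$. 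The main technical obstacle throughout is deploying the free-point property of $T$-elements, which rests on invoking the equivalence between the principal-extension and cyclic-flat descriptions of $Q_m(M)$ (or alternatively extracting the rank identity directly from the cyclic-flat axioms).
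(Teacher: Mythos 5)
Your overall architecture is sound and both directions can be made to work, but the argument as written rests on one load-bearing ingredient that you flag without establishing: the ``free-point property'' of the elements of $T$ (that for $x\in T_e$ with $x\notin S$, $x\in\cl(S)$ iff $\{a,e\}\subseteq\cl(S)$). You justify it by appeal to the principal-extension description of $Q_m(M)$, but the paper only asserts that description in passing and never proves it equivalent to the cyclic-flat definition; using it here is essentially assuming a reformulation of the lemma you are proving. The property \emph{is} derivable from what precedes the lemma --- if $x\in\cl(S)-S$ then some circuit $C\subseteq S\cup x$ contains $x$, and $\cl(C)$ is a cyclic flat not contained in $E$, hence a cone $q(F')$ with $e\in F'$, giving $\{a,e\}\subseteq\cl(S)$; conversely $\{a,e\}\subseteq\cl(S)$ forces $\cl(S)$ to be a cone containing $T_e$ --- but since this fact carries the forward direction of (3), the $y\in T_{e'}$ case, and the circuit-lifting case of your converse, it needs to be proved rather than cited. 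With that supplied, your remaining steps check out (e.g.\ the deduction $r(U)=|U|$ and $a\notin\cl(U)$ from Corollary \ref{cor:rank} is correct, and the lifted set $C''$ does satisfy $a\in\cl(C'')$).

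It is worth comparing routes, because the paper's proof is considerably shorter at exactly the points where you work hardest. For the forward direction of (3), the paper invokes Lemma \ref{lemma:coloops}(i) directly: since $a\notin F$, every element of $F\cap T$ is a coloop of $Q|F$, so $B\cup(F\cap T)$ is independent, and Lemma \ref{lemma:lines} transfers independence to $B\cup p(F\cap T)$ --- no rank identity or one-element-at-a-time closure argument is needed. For the converse, instead of verifying $y\notin\cl(F)$ for every $y\in E(Q)-F$ by cases, the paper argues by contradiction: if $F$ is not a flat, there is a circuit $C$ and $x\in C-F$ with $C-x\subseteq F$; condition (1) forces $C-x\not\subseteq E$, so $a\in\cl(C)=\cl(C-x)$, and then Corollary \ref{cor:rank} gives $r(p(C-x))=|C-x|-1$ while condition (2) gives $|p(C-x)|=|C-x|$, producing a dependent subset of $B\cup p(F\cap T)$ that contradicts (3). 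Your element-by-element verification buys nothing extra here and multiplies the places where the free-point property must be deployed; I would recommend adopting the circuit-based contradiction, which uses only the already-proved fact that every cyclic flat of $Q$ not contained in $E$ contains $a$.
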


 \begin{proof}
   If $F$ is a flat of $Q$ with $a\in F$, then $F\cap E$ (an
   intersection of flats) is a flat of $Q$ and of $M$, and $F$ is its
   cone.  The converse holds by the definition of $\mathcal{Z}(Q)$.
  
   Let $F$ be a flat of $Q$ not containing $a$. Condition (1) holds
   since $F\cap E$ (an intersection of flats of $Q$) is a flat of $Q$,
   and so of $M$.  If $p(x)=p(y)$ for some $x,y\in F$ with $x\neq y$,
   then $a\in\cl(\{x,y\})\subseteq F$, which is a contradiction, so
   condition (2) holds.  Let $B$ be a basis of $M|F\cap E$. By Lemma
   \ref{lemma:coloops}, since $a\notin F$, elements in $F\cap T$ are
   coloops of $Q|F$, so $B\cup (F\cap T)$ is independent.  Therefore
   $B\cup p(F\cap T)$ is independent by Lemma \ref{lemma:lines}, so
   condition (3) holds.

   For the converse, take $F\subseteq E(Q)$ with $a\not\in F$
   satisfying conditions (1)--(3).  Assume that $F$ is not a flat of
   $Q$.  Then there is a circuit $C$ of $Q$ and element $x\in C-F$
   with $C-x\subseteq F$.  By condition (1), $F\cap E$ is a flat of
   $M$, and hence of $Q$, so $C-x\not\subseteq E$.  Now $\cl(C)$ is a
   cyclic flat and $\cl(C)\not\subseteq E$, so $a\in \cl(C)$, and so
   $a\in\cl(C-x)$.  Corollary \ref{cor:rank} now gives
   $r(p(C-x)) = |C-x|-1$.  This contradicts condition (3) since
   condition (2) gives $|p(C-x)|=|C-x|$.\qed
\end{proof}

\begin{corollary}\label{cor:flags}
  Fix $m\geq 1$.  Let $M$ be a rank-$k$ loopless matroid and let
  $Q=Q_m(M)$. Let $(Y_i)=(Y_0,Y_1,\ldots,Y_{k+1})$ be a flag in
  $Q$. Fix $j\in [k+1]$. If $a\in Y_{j-1}$, then $Y_{j-1}=q(X_{j-2})$
  and $Y_j=q(X_{j-1})$ where $X_{j-2}$ and $X_{j-1}$ are flats of $M$
  of rank $j-2$ and $j-1$ respectively with $X_{j-2}\subset X_{j-1}$.
  If $a\notin Y_{j-1}$, then $Y_j$ satisfies exactly one of the
  following conditions:
  \begin{itemize}
  \item[\emph{(i)}] $Y_j=\cl(Y_{j-1}\cup a)=q(\cl(p(Y_{j-1})))$,
  \item[\emph{(ii)}] $Y_j-E=Y_{j-1}-E$ and
    $r(Y_j\cap E)=r(Y_{j-1}\cap E)+1$,
  \item[\emph{(iii)}] $Y_j=Y_{j-1}\cup x$, where $x\in T$ and
    $p(x)\notin \cl(p(Y_{j-1}))$.
  \end{itemize}
  Case \emph{(i)} occurs when $Y_j$ is the first flat in $(Y_i)$ that
  contains $a$.
\end{corollary}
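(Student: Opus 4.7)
The approach is to split on whether $a \in Y_{j-1}$, combining Lemma \ref{lemma:flats} (which characterizes flats of $Q$) with Corollary \ref{cor:rank} (the rank transfer between $S$ and $p(S)$). When $a \in Y_{j-1}$, also $a \in Y_j$, so Lemma \ref{lemma:flats} gives $Y_{j-1} = q(X_{j-2})$ and $Y_j = q(X_{j-1})$ for flats $X_{j-2}, X_{j-1}$ of $M$; the rank formula (\ref{eq:conerank}) together with $Y_{j-1} \subsetneq Y_j$ then forces $r(X_{j-2}) = j-2$, $r(X_{j-1}) = j-1$, and $X_{j-2} \subsetneq X_{j-1}$.

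Next, assume $a \notin Y_{j-1}$ and $a \in Y_j$. By Lemma \ref{lemma:flats}, $Y_j = q(F)$ for a flat $F$ of $M$, and (\ref{eq:conerank}) gives $r(F) = j-1$. Since $Y_{j-1} \subseteq Y_j$, $p(Y_{j-1}) \subseteq F$; since $a \notin \cl(Y_{j-1})$, Corollary \ref{cor:rank} gives $r(p(Y_{j-1})) = r(Y_{j-1}) = j-1$. Rank comparison then forces $\cl_M(p(Y_{j-1})) = F$, and also $\cl(Y_{j-1} \cup a) = Y_j$ (both are flats of rank $j$, one contained in the other), yielding (i).

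Now assume $a \notin Y_j$. Conditions (2) and (3) of Lemma \ref{lemma:flats} imply that $p$ is injective on each such flat $F$ and that, for a basis $B$ of $M|F\cap E$, the set $B \cup p(F \cap T)$ is independent in $M$ and spans $p(F)$. Combined with $r(F) = r(p(F))$ from Corollary \ref{cor:rank}, this yields the key identity $r(F) = r_M(F \cap E) + |F \cap T|$. Apply this to $Y_{j-1}$ and $Y_j$, and use $Y_{j-1} \cap E \subseteq Y_j \cap E$ (flats of $M$), $Y_{j-1} \cap T \subseteq Y_j \cap T$, and $r(Y_j) = r(Y_{j-1}) + 1$. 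Either $Y_{j-1} \cap T = Y_j \cap T$, forcing $r_M(Y_j \cap E) = r_M(Y_{j-1} \cap E) + 1$ (giving (ii)); or $Y_{j-1} \cap T \subsetneq Y_j \cap T$, in which case the rank equation forces $|Y_j \cap T| = |Y_{j-1} \cap T| + 1$ and $Y_j \cap E = Y_{j-1} \cap E$, so $Y_j = Y_{j-1} \cup \{x\}$ for some $x \in T$; Corollary \ref{cor:rank} applied to $Y_j$ then gives $p(x) \notin \cl_M(p(Y_{j-1}))$, giving (iii).

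The three options are visibly disjoint: (i) is marked by $a \in Y_j$, while (ii) and (iii) are separated by whether the new element lies in $E$ or in $T$. The final sentence of the statement is then a restatement of the setup of (i). The main obstacle is the $a \notin Y_j$ subcase: once the identity $r(F) = r_M(F \cap E) + |F \cap T|$ is in hand for flats $F$ of $Q$ avoiding $a$, the dichotomy between (ii) and (iii) reduces to a rank-counting comparison, and all remaining bookkeeping is immediate from Lemmas \ref{lemma:lines} and \ref{lemma:flats}.
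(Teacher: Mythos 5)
Your proof is correct and follows essentially the same route as the paper: the same case split on where $a$ first enters the flag, with Lemma \ref{lemma:flats} and Corollary \ref{cor:rank} doing the work. The only cosmetic difference is that in the $a\notin Y_j$ subcase you derive the dichotomy between (ii) and (iii) from the rank identity $r(F)=r_M(F\cap E)+|F\cap T|$, whereas the paper invokes Lemma \ref{lemma:coloops}(i) (elements of $F-E$ are coloops of $Q|F$) directly --- the same fact in different packaging.
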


\begin{proof}
  Corollary \ref{cor:rank} and Lemma \ref{lemma:flats} give the result
  when $a\in Y_{j-1}$.  Now assume that $a\notin Y_{j-1}$.  Then
  $r(Y_{j-1})=r(p(Y_{j-1}))$ by Corollary \ref{cor:rank}.  Option (i)
  accounts for the unique rank-$j$ flat that contains $Y_{j-1}\cup a$.
  Now assume $a\not\in Y_j$.  If some $x \in Y_j-Y_{j-1}$ is in $T$,
  then $x$ is a coloop in $Q|Y_j$ by Lemma \ref{lemma:coloops}, so
  $Y_j=Y_{j-1}\cup x$ and $p(x)\notin \cl(p(Y_{j-1}))$.  Finally, if
  $Y_j-Y_{j-1}\subseteq E$, then $Y_j-E = Y_{j-1}-E$.\qed
\end{proof}

We now prove the first result into which we divide the proof of
Theorem \ref{thm:main result}.

\begin{theorem}\label{thm:config}
  Fix $m\geq 1$. Let $M=(E,r)$ be a loopless matroid and $Q$ be its
  free $m$-cone. The configuration of $Q$ determines $M$.
\end{theorem}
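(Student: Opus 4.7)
The plan is to reconstruct, from the configuration of $Q = Q_m(M)$, the lattice of flats of $M$ together with the size of each flat. Since $M$ is loopless, this data determines $M$ up to isomorphism: the lattice of flats together with ranks determines the simplification of $M$, and the sizes of its rank-$1$ flats (atoms) determine the parallel class sizes. The key structural fact is that the cones $q(F)$, for $F$ a nonempty flat of $M$, together with $\varnothing$, form a sub-meet-semilattice $\mathcal{K}$ of $\mathcal{Z}(Q)$ that is isomorphic via $q$ to the lattice of flats of $M$; from the size and rank of a cone we recover $|F| = (|q(F)|-1)/(m+1)$ and $r(F) = r(q(F))-1$. So the whole task reduces to identifying $\mathcal{K}$ inside $\mathcal{Z}(Q)$. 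The preliminary invariants $n = |E|$ and $k = r(M)$ come directly from the unique top element $E(Q) = q(E)$ of $\mathcal{Z}(Q)$.

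To identify $\mathcal{K}$, I use two features that follow from Lemma \ref{lemma:flats}. First, the cones form an order filter in $\mathcal{Z}(Q)$: any cyclic flat of $Q$ containing a cone contains $a$, hence is itself a cone. Second, the cover relations split cleanly by type. A non-cone $Z \in \mathcal{Z}(M)$ is covered in $\mathcal{Z}(Q)$ by the cone $q(Z)$ of rank $r(Z)+1$ with $|q(Z)| - |Z| = m|Z|+1$, while a cone $q(F)$ has upper covers precisely of the form $q(G)$ for $G$ covering $F$ in the lattice of flats of $M$, with size differences $(m+1)(|G|-|F|)$. I then proceed by downward induction on rank, starting from $E(Q)$ (always a cone), classifying each element of $\mathcal{Z}(Q)$ as cone or non-cone according to whether it admits an upper cover -- already identified as a cone -- whose size exceeds $|Z|$ by exactly $m|Z|+1$.

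The main obstacle is that this cover-size criterion is not globally sharp: a cone $q(F)$ can admit an upper cover $q(G)$ whose size increment is exactly $m|q(F)|+1$, which forces $|G| = (m+1)|F|+1$. Such coincidences do occur; for instance, in $Q_m(U_{2,m+2})$ each rank-$2$ cone $q(\{e\})$ has $E(Q)$ as its upper cover with size difference $(m+1)^2 = m(m+2)+1$, mimicking the non-cone pattern. To dispose of such ambiguities, I impose a global consistency condition on the candidate cone sublattice $\mathcal{K}$: it must be a geometric lattice of rank $k+1$ whose atoms have, via $|F| = (|q(F)|-1)/(m+1)$, sizes summing to $n$. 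This pins down $\mathcal{K}$ up to isomorphism -- in the $U_{2,m+2}$ example, exactly $m+2$ of the $m+3$ rank-$2$ elements must be cones and one a non-cone, and any such choice yields the same reconstructed matroid. Once $\mathcal{K}$ has been correctly identified, reading off sizes and ranks produces the lattice of flats of $M$ with its atom sizes, and hence $M$ itself, up to isomorphism.
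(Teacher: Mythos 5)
Your overall strategy---identify the set of cones $\{q(F)\,:\,F \text{ a nonempty flat of } M\}$ inside the configuration of $Q$ and then read off the lattice of flats of $M$ together with its atom sizes---is sound, and your structural observations are correct: the cones do form an order filter of $\mathcal{Z}(Q)$, the cover-size increments are as you state, and $|F|=(|q(F)|-1)/(m+1)$ recovers sizes. You also correctly locate the central difficulty, namely that the local cover-size test cannot separate cones from non-cones, since a cone $q(F)$ with an upper cover $q(G)$ satisfying $|G|=(m+1)|F|+1$ carries the same numerical signature as a non-cone.

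The gap is that your resolution of this difficulty is asserted rather than proved. The sentence ``This pins down $\mathcal{K}$ up to isomorphism'' is exactly the hard content of the theorem, and a single worked example ($U_{2,m+2}$, which moreover has rank $2$, a case the paper sets aside as easy) does not establish it. What is needed is an argument that any two order filters of $\mathcal{Z}(Q)$ satisfying your global consistency conditions yield isomorphic reconstructed matroids; nothing in the proposal rules out two admissible choices of $\mathcal{K}$ producing nonisomorphic lattices of flats or different atom-size assignments. For comparison, the paper does not attempt to classify every element of $\mathcal{Z}(Q)$; it identifies only the family $\mathcal{L}$ of lines through the tip, characterizing it as the \emph{unique maximum-size} set of cyclic lines satisfying three configuration-detectable conditions (each line has at most one proper nonempty cyclic flat below it, the lines join to the top, and any two are coplanar), and it proves uniqueness by an explicit counting argument showing that every competing family is strictly smaller. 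The flats of $M$ are then recovered as unions of the point-sets sitting below each cyclic flat, bypassing the need to decide cone versus non-cone for elements of higher rank. An argument of comparable force is required where your proposal currently relies on the unproven uniqueness claim; until it is supplied, the proof is incomplete at its decisive step.
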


\begin{proof}
  The cases with $r(M)\leq 2$ are easy, so assume that $r(M)>2$.  Let
  $\mathcal{F}_1$ be the set of rank-$1$ flats of $M$.  Consider sets
  $\mathcal{C}$ of lines in $\mathcal{Z}(Q)$ that satisfy
  \begin{itemize}
  \item[(L1)] for each $L\in\mathcal{C}$, at most one proper, nonempty
    subset of $L$ is in $\mathcal{Z}(Q)$,
  \item[(L2)]
    $\bigvee\limits_{L\in \mathcal{C}}L=E(Q)$, and
  \item[(L3)] if $L_1, L_2\in \mathcal{C}$ with $L_1\neq L_2$, then
    $r(L_1\vee L_2)=3$.
  \end{itemize}
	
  By Lemma \ref{lemma:flats}, each cyclic line of $Q$ is either a cone
  $q(F)$ with $F\in \mathcal{F}_1$ (and so contains $a$) or a cyclic
  line of $M$. Let $\mathcal{L}$ be the set of all lines of $Q$ that
  contain $a$.  From the definition of $Q$, it follows that
  $\mathcal{L}$ satisfies properties (L1)--(L3).  If $L$ is a line
  with $L\subseteq E$, then there is an $F\in \mathcal{F}_1$ disjoint
  from $L$ (since $r(M)>2$) and $r(L\vee q(F))>3$.  So $\mathcal{L}$
  is a maximal set with these properties.
  
  Let $\mathcal{C}$ be a set of lines satisfying properties (L1)--(L3)
  with $\mathcal{C}\not\subseteq\mathcal{L}$.  Fix $L\in \mathcal{C}$
  with $L\subseteq E$.  By property (L2), $\mathcal{C}$ must contain a
  line $J$ with $a\in J$.  We claim that $J$ is the only such line.
  To see this, assume that $J'\in\mathcal{C}-\{J\}$ and $a\in J'$. Let
  $J\cap E=P$ and $J'\cap E=P'$.  Any line of $M$ that is coplanar
  with both $J$ and $J'$ must contain $P\cup P'$.  Exactly one line of
  $M$ contains $P\cup P'$, so $L$ is the only line of $M$ that is
  coplanar with both $J$ and $J'$, and so all lines of $\mathcal{C}$
  other than $L$ contain $a$.  However, the only lines containing $a$
  that $L$ is coplanar with are $q(\cl(x))$ for $x\in L$, so
  $\mathcal{C}$ would fail property (L2).  So $L'\subseteq E$ for each
  $L'\in \mathcal{C}-\{J\}$.
   
  Each $F\in\mathcal{F}_1$ is contained in a unique line in
  $\mathcal{L}$, so $|\mathcal{L}|= |\mathcal{F}_1|$.  Since all lines
  in $\mathcal{C}$ are cyclic, by property (L1) each line in
  $\mathcal{C}$ contains at least three rank-$1$ flats.  By property
  (L3), if two lines of $\mathcal{C}$, say $L_1$ and $L_2$, contain
  some $F\in\mathcal{F}_1$, then all lines in $\mathcal{C}$ contain
  $F$ since for any such line $L$, the planes $\cl(L_1\cup L)$ and
  $\cl(L_2\cup L)$ intersect in the line $L$ and contain the point
  $F$.  If $F\in\mathcal{F}_1$ is in all lines in $\mathcal{C}$, then
  $|\mathcal{C}|\leq \frac{|\mathcal{F}_1|-1}{2}+1$, otherwise
  $|\mathcal{C}|\leq \frac{|\mathcal{F}_1|}{3}+1$ (the $+1$ is for
  $J$).  So $|\mathcal{L}|>|\mathcal{C}|$.  So $\mathcal{L}$ is the
  unique set of largest size satisfying properties (L1)--(L3).

  We can detect properties (L1)--(L3) from the configuration, so we
  can identify $\mathcal{L}$ in the configuration of $Q$.  The
  configuration does not give the sets in $\mathcal{Z}(Q)$, but for
  each $X,Y\in\mathcal{Z}(Q)$, it gives us $|X|$, $|Y|$, and whether
  $X$ and $Y$ are comparable.  To get $M$ up to isomorphism, first
  pick pairwise disjoint sets $X_L$, one for each $L\in\mathcal{L}$,
  where if there is a (necessarily unique) nonempty cyclic flat $Y$
  with $Y< L$, then $|X_L|=|Y|$, otherwise $|X_L|=1$.  The cyclic
  flats in $\mathcal{Z}(Q)-\mathcal{Z}(M)$ are the cones of nonempty
  flats of $M$, so the flats of $M$, up to isomorphism, are, for each
  $F\in\mathcal{Z}(Q)$, the union of the sets $X_L$ such that
  $L \in \mathcal{L}$ and $L\leq F$ (note that $\varnothing$ is such a
  union).\qed
\end{proof}

To complete the proof of Theorem \ref{thm:main result}, we work mainly
with the catenary data, which, by Theorem \ref{thm:catdata}, is
equivalent to the $\mathcal{G}$-invariant.  We first develop one of
the keys to the proof: for a size-$n$, rank-$k$ loopless matroid $M$,
we define a bijection from a certain set of $4$-tuples, one component
of which is a flag of $M$, onto the set of flags of its free $m$-cone
$Q$.
 
To motivate the bijection, which appears in the lemma below, we first
consider how the flags of $Q$ relate to those of $M$.  If
$(Y_i)=(Y_0,Y_1,\ldots,Y_{k+1})$ is a flag of $Q$, then there is an
integer $h$ with $0\leq h\leq k$ so that $a\in Y_i$ if and only if
$i>h$.  By Lemma \ref{lemma:flats}, each $Y_i$ with $i>h$ is the cone
$q(X_{i-1})$ of a flat $X_{i-1}$ of $M$; the flats $X_h,\ldots,X_k$
are the end of a flag of $M$.  Let $|Y_h-E|=b$.  The elements of
$Y_h-E$ are coloops of $Q|Y_h$ by Lemma \ref{lemma:coloops}, so the
list $Y_0\cap E,Y_1\cap E, \ldots, Y_h\cap E$ contains $h-b+1$
distinct flats of $M$, say $X_0,X_1,\ldots,X_{h-b}$; these flats begin
a flag of $M$.  To complete $X_0,X_1,\ldots,X_{h-b}$ and
$X_h,\ldots,X_k$ to a flag of $M$, consider the elements of $Y_h-E$,
say $e_1,e_2,\ldots,e_b$, listed in the order in which each first
appears in the flag $(Y_i)$.  For each $j\in[b]$, let
$X_{h-b+j}=\cl\bigl(X_{h-b}\cup p(\{e_1, e_2, \ldots,
e_j\})\bigr)$. From Corollary \ref{cor:rank}, it follows that
$(X_i)=(X_0, X_1,\ldots, X_k)$ is a flag of $M$.  Each $e_j$ has
$p(e_j)\in X_{h-b+j}-X_{h-b+j-1}$.

\begin{lemma}\label{lemma:bijection}
  Fix $m\geq 1$.  Let $M$ be a size-$n$, rank-$k$ loopless matroid and
  let $Q$ be $Q_m(M)$.   Let $\mathcal{T}$ be the set of all $4$-tuples
  $\bigl((X_i),h,C,(e_j)\bigr)$ where
  \begin{itemize}
  \item[$\bullet$] $(X_i)$ is a flag of $M$,
  \item[$\bullet$] $h$ is an integer with $0\leq h\leq k$,
  \item[$\bullet$] $C$ is a subset of $[h]$, and
  \item[$\bullet$] $(e_i)=(e_1, e_2,\ldots, e_{|C|})$ is a $|C|$-tuple
    of elements of $T$ where, for $i$ with $1\leq i\leq |C|$, we have
    $p(e_i)\in X_{h-|C|+i}- X_{h-|C|+i-1}$.
  \end{itemize}
  Write the set $C$ in a $4$-tuple in $\mathcal{T}$ as
  $\{c_1,c_2,\ldots,c_{|C|}\}$ with $c_1<c_2<\cdots<c_{|C|}$, and the
  difference $[h]-C$ as $D=\{d_1,d_2,\ldots,d_{h-|C|}\}$ with
  $d_1<d_2<\cdots<d_{h-|C|}$.  The $j$th element of $C$ refers to
  $c_j$, and likewise for $D$.  Let $\mathcal{F}$ be the set of flags
  of $Q$.  Define $f:\mathcal{T}\to\mathcal{F}$ by
  $f\bigl((X_i),h,C,(e_j)\bigr)=(Y_i)$ where
  \begin{equation*} Y_i =
    \begin{cases} \varnothing, & \text{if } i=0,\\
      Y_{i-1}\cup e_j, & \text{if } i \text{ is the $j$th
        element of } C,\\  
      Y_{i-1}\cup X_j, & \text{if } i \text{ is the $j$th element of }
      D,\\ 
      q(X_{i-1}), & \text{if }
      h<i\leq k+1.
    \end{cases}
  \end{equation*}
  Then the map $f$ is a bijection.
\end{lemma}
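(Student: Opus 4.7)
The plan is to prove $f$ is a bijection by exhibiting an explicit inverse $g : \mathcal{F}\to\mathcal{T}$. This splits into two tasks: (a) check that $f$ lands in $\mathcal{F}$, so each output is a flag of $Q$; and (b) define $g$, verify it lands in $\mathcal{T}$, and confirm that $f$ and $g$ invert each other.

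For task (a), we induct on $i$ to show that $Y_i$ is a flat of $Q$ with $r(Y_i)=i$, in three cases matching the branches in the definition. If $h<i\le k+1$, then $Y_i = q(X_{i-1})\in\mathcal{Z}(Q)$ with the right rank by Equation \eqref{eq:conerank}, and the transition $Y_h\to Y_{h+1}$ matches Corollary \ref{cor:flags}(i) since $q(\cl_M(p(Y_h))) = q(X_h) = Y_{h+1}$. If $i$ is the $j$-th element of $D$, then $Y_i = Y_{i-1}\cup X_j$ matches Corollary \ref{cor:flags}(ii), with Lemma \ref{lemma:flats}(1)--(3) easily checked using that $X_j$ is a rank-$j$ flat of $M$. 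If $i$ is the $j$-th element of $C$, then Corollary \ref{cor:flags}(iii) requires $p(e_j)\notin\cl_M(p(Y_{i-1}))$; a side induction identifies $p(Y_{i-1}) = X_{c_j - j}\cup\{p(e_1),\ldots,p(e_{j-1})\}$, whose closure lies in $X_{h-|C|+j-1}$, so the hypothesis $p(e_j)\in X_{h-|C|+j}-X_{h-|C|+j-1}$ is precisely what makes the step valid.

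For task (b), given $(Y_i)\in\mathcal{F}$, let $h\in\{0,\ldots,k\}$ be the unique index with $a\in Y_i$ iff $i>h$. For $i>h$, Lemma \ref{lemma:flats} yields $Y_i = q(F_i)$ for a flat $F_i$ of $M$, and we set $X_{i-1}:=F_i$. For $i\in[h]$, Corollary \ref{cor:flags} forces the step to be of type (ii) or (iii); let $C$ consist of the type-(iii) indices, list the $T$-elements added in order as $(e_1,\ldots,e_{|C|})$, and for each $j\in[h-|C|]$ set $X_j := Y_{d_j}\cap E$. Define the intermediate flats by $X_{h-|C|+j} := \cl_M\bigl(X_{h-|C|}\cup\{p(e_1),\ldots,p(e_j)\}\bigr)$ for $j\in[|C|]$. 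Corollary \ref{cor:rank} applied to $Y_h$ shows that $p(e_1),\ldots,p(e_{|C|})$ are independent over $X_{h-|C|}$ in $M$, so each $X_i$ has rank $i$ and $(X_i)$ is a flag of $M$; the constraint $p(e_j)\in X_{h-|C|+j}-X_{h-|C|+j-1}$ is then immediate from the definition, so the output lies in $\mathcal{T}$.

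The chief obstacle is the index bookkeeping: tracking the position $i$ in $[h]$, the counts $|C\cap[i]|$ and $|D\cap[i]|$, and the indices into $(e_j)$ and $(X_j)$ simultaneously, in particular to justify the identification $\cl_M(p(Y_{c_j-1}))\subseteq X_{h-|C|+j-1}$ used in task (a). Once this is in hand, the two constructions match step by step, and $f\circ g = \mathrm{id}_{\mathcal{F}}$ and $g\circ f = \mathrm{id}_{\mathcal{T}}$ follow by routine unwinding of the definitions.
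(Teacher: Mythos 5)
Your proposal is correct and follows essentially the same route as the paper: verify via the flat characterization (Lemma \ref{lemma:flats} and Corollary \ref{cor:flags}) that each $Y_i$ is a flat of rank $i$, with the key computation being that $p(Y_{c_j-1})=X_{c_j-j}\cup\{p(e_1),\ldots,p(e_{j-1})\}\subseteq X_{h-|C|+j-1}$ so that adjoining $e_j$ raises the rank, and then invert $f$ exactly as the paper does in the paragraph preceding its proof (reading off $h$ from where $a$ first appears, $C$ from the singleton $T$-steps, and completing the flag of $M$ by taking closures $\cl_M(X_{h-|C|}\cup p(\{e_1,\ldots,e_j\}))$). The only cosmetic difference is that you cite Corollary \ref{cor:flags}(iii) as a sufficiency criterion where the paper checks conditions (1)--(3) of Lemma \ref{lemma:flats} directly, but your identification of $p(Y_{i-1})$ supplies exactly the independence needed for condition (3).
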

  
\begin{proof}
  We first show that $(Y_i)$ is indeed a flag of $Q$.  Clearly
  $Y_i\subsetneq Y_{i+1}$, so we focus on showing that each $Y_i$ is a
  flat of $Q$ and $r(Y_i)=i$.  This is clear for $Y_0$ as well as for
  $Y_i$ with $h<i\leq k+1$.  For $0<i\leq h$, we show that $Y_i$
  satisfies the conditions in Lemma \ref{lemma:flats}, and so is a
  flat. Condition (1) is immediate.  Condition (2) holds since (i)
  $Y_h-E=\{e_1,e_2,\ldots,e_{|C|}\}$, and (ii) $p(e_j)$ is in
  $X_{h-|C|+j}- X_{h-|C|+j-1}$, and all $|C|$ such differences are
  disjoint from $X_{h-|C|}$ and each other.  For condition (3), the
  intersection $Y_i\cap E$ is $X_l$ for some $l$ with
  $0\leq l\leq h-|C|$, and each element $e_j$ in $Y_i$ has
  $p(e_j)\in X_{h-|C|+j}-X_{h-|C|+j-1}$, so for any basis $B$ of
  $M|Y_i\cap E$, we get $B\cup p(Y_i\cap T)$ by adjoining to $B$ a
  sequence of elements, none of which is in the closure of $B$ and the
  earlier elements in the sequence.  Thus, $B\cup p(Y_i\cap T)$ is
  independent, so condition (3) holds.  So $Y_i$ is a flat.  If $i$ is
  the $j$-th element of $C$, then $e_j$ is a coloop of $Y_i$; if $i$
  is the $j$-th element of $D$, then $Y_i\cap T=Y_{i-1}\cap T$, and
  $Y_i\cap E=X_j$ and $Y_{i-1}\cap E=X_{j-1}$; both options give
  $r(Y_i)=r(Y_{i-1})+1$, and so we get $r(Y_i)=i$ by induction.

  The paragraph before the proof in effect treats the inverse: map a
  flag $(Y_i)$ of $Q$ to the $4$-tuple $\bigl((X_i),h,C,(e_j)\bigr)$
  where $(X_i)$, $h$, and $(e_j)$ are as stated in that paragraph and
  $C=\{i\,:\,Y_i-Y_{i-1}=\{e_j\} \text{ for some } j\}$.  So $f$ is a
  bijection.
\end{proof}

  We next prove the result that completes the proof of Theorem
\ref{thm:main result}.

\begin{theorem}\label{thm:G-inv}
  Fix $m\geq 1$.  Let $M$ be a size-$n$, rank-$k$ loopless matroid and
  let $Q$ be $Q_m(M)$. Then $\mathcal{G}(Q)$ can be determined from
  $\mathcal{G}(M)$.
\end{theorem}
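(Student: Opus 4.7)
The plan is to exploit Theorem \ref{thm:catdata}: since $\mathcal{G}(M)$ and $\mathcal{G}(Q)$ are each equivalent to the corresponding catenary data, it suffices to show that, for every $(|E(Q)|, k+1)$-composition $(b_i)$, the number $\nu(Q;(b_i))$ of flags of $Q$ with composition $(b_i)$ is a function of the values $\nu(M;(a_i))$ as $(a_i)$ ranges over $(n,k)$-compositions. I will count flags of $Q$ by translating through the bijection $f$ of Lemma \ref{lemma:bijection} into a sum over the $4$-tuples in $\mathcal{T}$.

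The key step is to show that, for fixed $((X_i), h, C)$, both the composition of $(Y_i) = f\bigl((X_i), h, C, (e_j)\bigr)$ and the number of admissible $(e_j)$ depend on $(X_i)$ only through its composition $(a_0, \ldots, a_k)$. Writing $t = |C|$ and $D = [h] - C = \{d_1 < \cdots < d_{h-t}\}$, a case analysis of the four clauses defining $f$ should produce a piecewise formula for $(b_i)$: the entry is $0$ at position $0$, equals $1$ at each position in $C$, equals $a_j$ at the $j$th element $d_j$ of $D$, equals $(m+1)a_{i-1}$ at positions $i$ with $h+1 < i \leq k+1$, and at the transition position $h+1$ equals
\begin{equation*}
1 + (m+1)(a_0 + \cdots + a_h) - (a_0 + \cdots + a_{h-t}) - t,
\end{equation*}
accounting for the tip $a$, the $T$-elements over $X_h$ not already in $Y_h$, and the $E$-elements of $X_h$ still missing from $Y_h$. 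Independently, the admissible tuples $(e_j)$ are counted by $m^t \prod_{j=1}^{t} a_{h-t+j}$, since each $e_j$ may be any element of $T$ whose $p$-image lies in $X_{h-t+j} - X_{h-t+j-1}$.

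Combining these facts and grouping $4$-tuples in $\mathcal{T}$ by the composition of the flag they produce should yield
\begin{equation*}
\nu(Q;(b_i)) = \sum \nu(M;(a_i)) \cdot m^{|C|} \prod_{j=1}^{|C|} a_{h-|C|+j},
\end{equation*}
where the sum ranges over all triples $((a_i), h, C)$ with $(a_i)$ an $(n,k)$-composition, $0 \leq h \leq k$, and $C \subseteq [h]$, whose induced composition (computed by the formulas above) equals the prescribed $(b_i)$. Since the right-hand side is explicitly determined by the catenary data of $M$, so is $\nu(Q;(b_i))$, and therefore $\mathcal{G}(Q)$ is determined by $\mathcal{G}(M)$. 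The main obstacle is bookkeeping: carefully tracking indices to confirm the piecewise formula above, especially at the transition entry $b_{h+1}$, where the flag first contains the tip $a$ and simultaneously absorbs all of the $T$-elements over $X_h$ that were not already selected into $Y_h$.
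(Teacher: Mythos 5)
Your proposal is correct and follows essentially the same route as the paper: it uses Theorem \ref{thm:catdata} to pass to catenary data, the bijection $f$ of Lemma \ref{lemma:bijection} to enumerate flags of $Q$, the observation that the image composition depends only on $((a_i),h,C)$, and the count $m^{|C|}\prod_j a_{h-|C|+j}$ for the admissible tuples $(e_j)$. Your formula for the transition entry $b_{h+1}$ agrees with the paper's $1+\sum_{j=1}^h\bigl(a_j(m+1)-b_j\bigr)$ (since $a_0=0$ for a loopless matroid), and packaging the conclusion as a formula for each $\nu(Q;(b_i))$ rather than as a $\gamma$-basis expansion of $\mathcal{G}(Q)$ is an immaterial difference.
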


\begin{proof}
  As stated above, we work mainly with the catenary data, and for that
  we use the bijection $f$ in Lemma \ref{lemma:bijection}.  More
  precisely, from the composition of the flag $(X_i)$ of $M$ and the
  other three entries in a $4$-tuple in
  $\bigl((X_i),h,C,(e_j)\bigr)\in\mathcal{T}$, we find the composition
  of the image $f\bigl((X_i),h,C,(e_j)\bigr)$.  With this, we express
  $\mathcal{G}(Q)$ using the catenary data of $M$.

  By Theorem \ref{thm:catdata},
  $$\mathcal{G}(Q)=\sum_{(b_i)}\nu(Q;(b_i))\gamma((b_i))$$ where the
  sum is over all $((m+1)n+1,k+1)$-compositions
  $(b_i)=(b_0, b_1, \ldots, b_{k+1})$, and $\gamma((b_i))$ is the
  $\gamma$-basis vector indexed by $(b_i)$. Let
  $f ((X_i), h, C, (e_j))=(Y_i)$, let $(a_i)$ be the composition of
  $(X_i)$, and let $(b_i)$ be that of $(Y_i)$.  We next show that
  $(b_i)$ depends only on $(a_i)$, $h$, and $C$; this justifies
  letting $\hat{f}((a_i), h, C)$ denote $(b_i)$.  Since $Q$ has no
  loops, $b_0=0$. For $i\in [h]$, if $i\in C$, then $Y_i-Y_{i-1}$ is a
  singleton subset of $T$, so $b_i=1$; if $i$ is the $j$-th element of
  $D$, then $Y_i-Y_{i-1}=X_j- X_{j-1}$, so $b_i=a_j$.  If $i>h$, then
  $Y_i=q(X_{i-1})$, so $b_{h+1}=1+\sum_{j=1}^h(a_j(m+1)-b_j)$, and
  $b_i=(m+1) a_{i-1}$ for $i>h+1$.

  The triple $((a_i), h, C)$ determines another key item: there are
  $m \cdot a_{h-|C|+j} $ options for $e_j$ since
  $e_j \in \bigl(q(X_{h-|C|+j})-q(X_{h-|C|+j-1})\bigr)-E$.
  With this, we get
  \begin{equation*}
    \mathcal{G}(Q) =\sum_{(a_i)}\nu(M;(a_i))\sum\limits_{h=0}^k\sum_{C\subseteq
      [h]}\left(\prod\limits_{j=1}^{|C|}
      m \cdot a_{h-|C|+j}\right)\gamma(\hat{f}((a_i), h, C)). 
  \end{equation*}
  The sums account for all triples $((a_i), h, C)$.  The term
  $\nu(M;a_i)$ accounts for all flags $(X_i)$ of $M$ that have
  composition $(a_i)$.  When $h=0$, we take $[h]=\varnothing$.  The
  product accounts for the choices of $(e_1, e_2,\ldots, e_{|C|})$,
  and $\gamma(\hat{f}((a_i), h, C))$ is the $\gamma$-basis element
  that all flags of $Q$ of the form $f((X_i), h, C,(e_j))$, where
  $(X_i)$ and $(e_j)$ vary over all of their respective options,
  contribute to $\mathcal{G}(Q)$.\qed
\end{proof} 

The next corollary better fits our main result.

\begin{corollary}
  If $M$ and $N$ are loopless and $\mathcal{G}(M)=\mathcal{G}(N)$,
  then $\mathcal{G}(Q_m(M))=\mathcal{G}(Q_m(N))$.
\end{corollary}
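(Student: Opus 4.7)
The plan is to observe that this corollary is essentially a restatement of Theorem \ref{thm:G-inv} once one notes that $\mathcal{G}(M)$ is strong enough input to feed into the formula produced in that theorem. More precisely, the proof of Theorem \ref{thm:G-inv} gives an explicit formula expressing $\mathcal{G}(Q_m(M))$ as a linear combination of $\gamma$-basis vectors, with coefficients built entirely from $m$, $n$, $k$, and the catenary data $\{\nu(M;(a_i))\}$ of $M$. So to conclude $\mathcal{G}(Q_m(M)) = \mathcal{G}(Q_m(N))$ it suffices to check that $M$ and $N$ agree in all of these inputs.

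First, I would observe that both $n = |E(M)|$ and $k = r(M)$ are recoverable from $\mathcal{G}(M)$ alone: every nonzero term $[\underline r(\pi)]$ in $\mathcal{G}(M)$ is indexed by an $(n,k)$-sequence, so $n$ is the length of any such sequence and $k$ is the number of ones in it. Second, Theorem \ref{thm:catdata} tells us that $\mathcal{G}(M)$ and the catenary data $\{\nu(M;(a_i))\}$ determine one another. Hence the hypothesis $\mathcal{G}(M)=\mathcal{G}(N)$ forces $|E(M)|=|E(N)|$, $r(M)=r(N)$, and $\nu(M;(a_i))=\nu(N;(a_i))$ for every $(n,k)$-composition $(a_i)$.

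Now I would plug both $M$ and $N$ into the formula from Theorem \ref{thm:G-inv},
\[
\mathcal{G}(Q_m(M)) = \sum_{(a_i)}\nu(M;(a_i))\sum_{h=0}^{k}\sum_{C\subseteq[h]}\left(\prod_{j=1}^{|C|} m\cdot a_{h-|C|+j}\right)\gamma(\hat f((a_i),h,C)),
\]
and the analogous expression for $N$. The function $\hat f$ and the $\gamma$-basis vectors it indexes depend only on $(a_i)$, $h$, $C$, $m$, $n$, $k$, so the two right-hand sides coincide term by term, giving $\mathcal{G}(Q_m(M))=\mathcal{G}(Q_m(N))$.

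There is no genuine obstacle here; the corollary is purely a repackaging of Theorem \ref{thm:G-inv} into the symmetric form needed for Theorem \ref{thm:main result}. The only point worth flagging in the write-up is the small remark in the second paragraph that $n$ and $k$ can be read off from $\mathcal{G}(M)$, so that equality of $\mathcal{G}$-invariants alone really does suffice to invoke the formula for both $M$ and $N$.
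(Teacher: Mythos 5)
Your proposal is correct and matches the paper's (implicit) argument: the corollary is stated as an immediate consequence of Theorem \ref{thm:G-inv}, since that theorem shows $\mathcal{G}(M)$ determines $\mathcal{G}(Q_m(M))$. Your extra remark that $n$ and $k$ are recoverable from $\mathcal{G}(M)$ is a correct and sensible detail to flag, but it does not change the route.
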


\section{Variants of the free $m$-cone}\label{section:variants}

As we show in this section, the proof of Theorem \ref{thm:G-inv}
adapts to give analogous results for the following deletions of the
free $m$-cone: the \textit{tipless $m$-cone} $Q\setminus a$, the
\textit{baseless $m$-cone} $Q\setminus E$, and the \textit{tipless
  baseless $m$-cone} $Q\setminus (E\cup a)$.

We first prove a result noted by Kung \cite{perscomm}: these variants
include an important, well-studied construction, the Higgs lift.
Recall that the \emph{Higgs lift} of a matroid $M=(E,r)$ is the
matroid $L(M)=(E,r_L)$ where $r_L(X) = \min\{r(X)+1,|X|\}$ for all
$X\subseteq E$.

\begin{theorem}\label{thm:higgs}
  If $M$ has no loops, then the tipless baseless $1$-cone
  $Q\setminus(E\cup a)$ of $M$ is isomorphic to the Higgs lift
  $L(M)$ of $M$.
\end{theorem}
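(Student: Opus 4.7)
Since $m=1$, each $T_e$ is a singleton $\{\bar e\}$, and my plan is to
show that the natural bijection $\phi\colon E\to T$, $e\mapsto\bar e$,
is a matroid isomorphism from $L(M)$ to $Q\setminus(E\cup a)$ by
verifying the rank identity $r_Q(\phi(X))=r_L(X)=\min\{r(X)+1,|X|\}$
for every $X\subseteq E$.  Since $\phi(X)\subseteq T$ is disjoint
from $E\cup a$, its rank in the restriction equals $r_Q(\phi(X))$.
The key observation is that $p(\phi(X))=X$, so Corollary
\ref{cor:rank} already forces $r_Q(\phi(X))\in\{r(X),r(X)+1\}$, with
the larger value occurring precisely when $a\in\cl_Q(\phi(X))$.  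So
the whole task reduces to showing that $a\in\cl_Q(\phi(X))$ if and
only if $X$ is dependent in $M$.

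For the independent direction, if $X$ is independent in $M$ then
$\cl_Q(X)=\cl_M(X)\subseteq E$ by Lemma \ref{lemma:coloops}, so
$a\notin\cl_Q(X)$ and $X\cup a$ is independent in $Q$.  I will then
apply Lemma \ref{lemma:lines} to the lines
$L_e=\cl_Q(\{e,a\})$ for $e\in X$---these are distinct because no two
elements of an independent set of $M$ are parallel in $M$---with
$x_e=e$ and $y_e=\bar e$, transferring independence of $X\cup a$ to
independence of $\phi(X)\cup a$.  In particular $\phi(X)$ is
independent in $Q$, so $r_Q(\phi(X))=|X|=r_L(X)$.

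For the dependent direction, suppose $X$ is dependent, so
$r_L(X)=r(X)+1$, and assume for contradiction that
$F=\cl_Q(\phi(X))$ does not contain $a$.  Then $F$ is a flat of $Q$
avoiding $a$ with $\phi(X)\subseteq F\cap T$, so condition~(3) of
Lemma~\ref{lemma:flats} ensures that $B\cup p(F\cap T)$ is
independent in $M$ for every basis $B$ of $M|(F\cap E)$.  Since
$p(F\cap T)\supseteq p(\phi(X))=X$, this forces $X$ to be a subset
of an independent set of $M$, contradicting its dependence.  Hence
$a\in\cl_Q(\phi(X))$ and $r_Q(\phi(X))=r(X)+1=r_L(X)$.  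The only
genuinely non-mechanical step is this dependent case, and it is
handled cleanly by condition~(3) of Lemma~\ref{lemma:flats}, which
is precisely the tool needed to rule out a flat of $Q\setminus(E\cup a)$
that would witness $\phi(X)$ having rank too small.
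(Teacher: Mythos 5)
Your proof is correct and follows essentially the same route as the paper's: identify $E$ with $T$ via $e\mapsto\overline{e}$, reduce via Corollary \ref{cor:rank} to deciding when $a\in\cl(\phi(X))$, and settle the independent case with Lemma \ref{lemma:lines}. The only (minor) divergence is in the dependent case, where you argue contrapositively through condition (3) of Lemma \ref{lemma:flats} rather than, as the paper does, directly exhibiting why $a$ lies in $\cl(\phi(X))$ via parallel elements or the closure of a circuit in $T$; both arguments are valid and of comparable length.
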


\begin{proof}
  For a $1$-cone, the restriction $p:2^T\to 2^E$ of the map $p$
  defined earlier is a bijection, and $p$ gives a bijection from the
  ground set of $Q\setminus(E\cup a)$ onto that of $L(M)$.  We use
  $p^{-1}$ to relabel $L(M)$ so that its ground set is $T$.  With this
  relabeling, the rank function $r_L$ of $L(M)$ is given by
   \begin{equation*}\label{eqn:rankofprojection}
    r_L(S) =
    \begin{cases}
      r(p(S)), & \text{if }  p(S) \text{ is independent in } M, \\
      r(p(S))+1, & \text{otherwise,}
    \end{cases}
  \end{equation*}
  for $S\subseteq T$.  By Corollary \ref{cor:rank}, the rank of $S$ in
  $Q\setminus(E\cup a)$ is given by
   \begin{equation*}\label{eqn:rankofprojection}
    r(S) =
    \begin{cases}
      r(p(S)), & \text{if } a\not\in \cl(S),\\
      r(p(S))+1, & \text{otherwise.}
    \end{cases}
  \end{equation*}
  When $p(S)$ is independent, Lemma \ref{lemma:lines} applies to the
  sets $S$ and $p(S)$; now $p(S)\cup a$ is independent since
  $a\not\in \cl(p(S))$, so $S\cup a$ is independent, so
  $a\not\in\cl(S)$; therefore, $r_L(S)=r(p(S))=r(S)$.  When $p(S)$ is
  dependent in $M$, either (i) $p(S)$ contains parallel elements, and
  so $S$ contains two points that are collinear with $a$, so
  $a\in \cl(S)$, or (ii) Lemma \ref{lemma:lines} applies and gives
  that $S$ is dependent, but the closure of any circuit in $T$
  contains $a$, so $a\in \cl(S)$; thus, $r_L(S)=r(p(S))+1=r(S)$.\qed
\end{proof}

Given a matroid $M$, a flag $(X_i)$ of $M$, and $S\subset E(M)$, we
say that $(X_i)$ \textit{collapses} in $M\setminus S$ if
$X_i-S=X_{i+1}- S$ for some $i$.  If it is obvious what $S$ is, we
just say that $(X_i)$ collapses. The next lemma relates the flags of
$M$ to those of $M\setminus S$ when at least one flag of $M$ does not
collapse.

\begin{lemma}\label{lemma:flag deletion}
  Let $S$ be a nonempty subset of $E(M)$.  Not all flags of $M$
  collapse in $M\setminus S$ if and only if $r(M)=r(M\setminus S)$.
  In that case, the flags of $M\setminus S$ are exactly the sequences
  $(Y_i-S)$ where $(Y_i)$ is a flag of $M$ that does not collapse.
\end{lemma}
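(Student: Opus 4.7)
The plan is to first verify two background facts about flats under deletion: (a) if $F$ is a flat of $M$, then $F-S=F\cap(E(M)-S)$ is a flat of $M\setminus S$, and (b) conversely every flat $Z$ of $M\setminus S$ has the form $F-S$ for a flat $F$ of $M$; indeed $Z=\cl_M(Z)-S$, since $\cl_M(Z)$ is a flat of $M$ meeting $E(M)-S$ in $Z$. I will also use the standard fact that for $A\subseteq E(M)-S$, the rank of $A$ in $M$ and in $M\setminus S$ coincide. Let $k=r(M)$.

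For the forward direction of the biconditional, suppose a flag $(Y_i)_{i=0}^k$ of $M$ does not collapse. By (a), each $Y_i-S$ is a flat of $M\setminus S$, and non-collapsing means the chain $Y_0-S\subsetneq Y_1-S\subsetneq\cdots\subsetneq Y_k-S$ is strictly increasing. Hence the ranks in $M\setminus S$ strictly increase from $0$, giving $r(M\setminus S)\geq k$. Since deletion cannot increase rank, $r(M\setminus S)=r(M)$, and moreover $r_{M\setminus S}(Y_i-S)=i$ for every $i$, so $(Y_i-S)$ is a flag of $M\setminus S$. For the contrapositive, if $r(M\setminus S)<r(M)=k$, then for any flag $(Y_i)$ of $M$ the sequence of ranks $r_{M\setminus S}(Y_i-S)$ cannot strictly increase from $0$ to $k$, so there is some $i$ with $r_{M\setminus S}(Y_i-S)=r_{M\setminus S}(Y_{i+1}-S)$, and since $Y_i-S\subseteq Y_{i+1}-S$ are flats of $M\setminus S$, equality of ranks forces $Y_i-S=Y_{i+1}-S$; thus $(Y_i)$ collapses.

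For the reverse direction and the final description, assume $r(M)=r(M\setminus S)=k$. Given a flag $(Z_i)_{i=0}^k$ of $M\setminus S$, set $X_i=\cl_M(Z_i)$. By (b), $X_i-S=Z_i$, and by the rank-equality fact, $r_M(X_i)=r_M(Z_i)=r_{M\setminus S}(Z_i)=i$, so each $X_i$ is a rank-$i$ flat of $M$; strict increase of the $Z_i$ forces strict increase of the $X_i$, so $(X_i)$ is a flag of $M$, and it does not collapse because $X_i-S=Z_i\neq Z_{i+1}=X_{i+1}-S$. Conversely, if $(Y_i)$ is any non-collapsing flag of $M$, then as in the forward direction $(Y_i-S)$ is a flag of $M\setminus S$, and $\cl_M(Y_i-S)=Y_i$ (since $Y_i$ is a flat containing $Y_i-S$ and the two sets have the same rank $i$ in $M$ by the forward analysis). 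Thus the maps $(Y_i)\mapsto(Y_i-S)$ and $(Z_i)\mapsto(\cl_M(Z_i))$ are mutually inverse, giving that the flags of $M\setminus S$ are exactly the sequences $(Y_i-S)$ with $(Y_i)$ a non-collapsing flag of $M$.

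The only mildly delicate point is the rank bookkeeping between $M$ and $M\setminus S$: showing that non-collapsing forces both the rank equality $r(M)=r(M\setminus S)$ and that $r_M(Y_i-S)=i$ (so that $\cl_M$ recovers $Y_i$), and dually that a flag in $M\setminus S$ lifts via $\cl_M$ to a flag of $M$ of the same length. Once the rank-equality lemma for subsets of $E(M)-S$ is invoked, both follow cleanly from the strict monotonicity of ranks along a flag.
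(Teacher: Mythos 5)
Your proposal is correct and follows essentially the same route as the paper: restrict a non-collapsing flag to get a strictly increasing chain of flats in $M\setminus S$ (forcing $r(M\setminus S)=r(M)$), and lift a flag of $M\setminus S$ back via $\cl_M$. Your version merely spells out the rank bookkeeping and the mutual-inverse bijection that the paper leaves implicit.
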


\begin{proof}
  Let $(Y_i)$ be a flag of $M$ that does not collapse. The $r(M)+1$
  flats in the chain of flats $(Y_i-S)$ in $M\setminus S$ strictly
  increase in rank, so $r(M\setminus S)=r(M)$ and $(Y_i-S)$ is a flag
  of $M\setminus S$. Now assume that $r(M\setminus S)=r(M)$ and let
  $(Z_i)$ be a flag of $M\setminus S$.  Since $r(\cl_M(Z_i))=i$ and
  $r(M\setminus S)=r(M)$, it follows that $(\cl_M(Z_i))$ is a flag of
  $M$ and $(\cl_M(Z_i)-S)=(Z_i)$.\qed
\end{proof}

\begin{theorem}\label{thm:tipless}
  Fix $m\geq 1$.  Let $M$ be a rank-$k$ loopless matroid and let
  $Q=Q_m(M)$.  The $\mathcal{G}$-invariants of $Q\setminus a$,
  $Q\setminus E$, and $Q\setminus (E\cup a)$ can be computed from
  $\mathcal{G}(M)$.
\end{theorem}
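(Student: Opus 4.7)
By Theorem \ref{thm:catdata}, it suffices to show that the catenary data of each variant $Q'\in\{Q\setminus a,Q\setminus E,Q\setminus(E\cup a)\}$ is determined by that of $M$. The plan is to mirror the proof of Theorem \ref{thm:G-inv}, parametrizing flags of $Q$ via the bijection $f$ of Lemma \ref{lemma:bijection} and then identifying those flags that induce flags of $Q'$.

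First I would verify that $r(Q\setminus a)=r(Q\setminus E)=k+1$ in all cases, while $r(Q\setminus(E\cup a))=k+1$ except in the single exceptional case where $m=1$ and $M$ is free of rank $k$, in which case $Q\setminus(E\cup a)$ is the free matroid on $k$ elements by Theorem \ref{thm:higgs}. These rank computations follow from Corollary \ref{cor:rank}: when $m\geq 2$ or $M$ has a dependent set, the tip $a$ lies in $\cl(T)$. Since $\mathcal{G}(M)$ records the rank of $M$ and detects freeness, the exceptional case is handled directly.

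In each case with $r(Q')=r(Q)$, Lemma \ref{lemma:flag deletion} identifies the flags of $Q'$ with the non-collapsing flags of $Q$. Writing each flag of $Q$ as $f((X_i),h,C,(e_j))=(Y_i)$, I would determine, step by step, when $(Y_i-S)-(Y_{i-1}-S)$ is empty for $S\in\{a,E,E\cup a\}$, splitting into the subcases $i\leq h$, $i=h+1$, and $i>h+1$. The explicit form of $Y_i-Y_{i-1}$ coming from the bijection shows that the non-collapsing condition is a simple combinatorial constraint on $(h,C)$: for $Q\setminus a$, that $h\geq 1$; for $Q\setminus E$, that $C=[h]$, so every step at level $\leq h$ adds an element of $T$; and for $Q\setminus(E\cup a)$, that $C=[h]$, that $h\geq 1$, and, when $m=1$, that some $a_i$ with $i\in[h]$ exceeds $1$.

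For each non-collapsing flag, the composition of its image in $Q'$ is read off from $(a_i)$, $h$, and $C$ by a direct computation, since the only entries of the composition that change under the deletion are the one at index $h+1$ and (for the variants that remove $E$) the entries at indices $i>h+1$, which drop from $(m+1)a_{i-1}$ to $m\cdot a_{i-1}$. Summing over all admissible $(h,C)$, over the $\prod_{j}m\cdot a_j$ choices of $(e_j)$, and over flags of $M$ of each composition $(a_i)$ expresses $\mathcal{G}(Q')$ as a linear combination of $\gamma$-basis elements whose coefficients depend only on $\nu(M;(a_i))$, hence only on $\mathcal{G}(M)$. The main obstacle is the bookkeeping: each variant has its own collapsing behavior at the step $i=h+1$, and the sub-case $m=1$ of $Q\setminus(E\cup a)$ requires an extra genericity condition on $(X_i)$ that must be tracked carefully.
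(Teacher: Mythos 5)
Your plan matches the paper's proof essentially step for step: reduce to catenary data via Theorem \ref{thm:catdata}, use Lemma \ref{lemma:flag deletion} to identify flags of each variant with non-collapsing flags of $Q$, translate non-collapse into the conditions $h\geq 1$ for $Q\setminus a$, $C=[h]$ for $Q\setminus E$, and $C=[h]$, $h\geq 1$ (plus some $a_i>1$ when $m=1$) for $Q\setminus(E\cup a)$, and read off the modified compositions at index $h+1$ and at indices above $h+1$; your handling of the exceptional case $M=U_{k,k}$ via Theorem \ref{thm:higgs} and $\mathcal{G}(M)=k![11\ldots1]$ is also exactly the paper's. The proposal is correct and takes the same route.
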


\begin{proof}
  A flag $(Y_i)$ of $Q$ collapses in $Q\setminus a$ if and only if
  $Y_{i+1}-Y_i=\{a\}$ for some $i$, which, by Corollary
  \ref{cor:flags}, occurs if and only if $Y_1=\{a\}$.  So, using the
  bijection $f$ in Lemma \ref{lemma:bijection}, a flag $(Y_i)$ of $Q$
  does not collapse if and only if $(Y_i)=f((X_i), h, C, (e_j))$ for
  some $((X_i), h, C, (e_j))\in \mathcal{T}$ with $ h\geq 1$.  The
  composition $(b_i)$ of the flag $(Y_i-a)$ of $Q\setminus a$ is the
  same as that of the flag $(Y_i)$ except that
  $b_{h+1}=\sum_{j=1}^h(a_j(m+1)-b_j)$. Letting $f_1((a_i), h, C)$
  denote $(b_i)$, we get
  \begin{equation*}
    \mathcal{G}(Q\setminus a) =\sum_{(a_i)}\nu(M;(a_i))\sum
    \limits_{h=1}^k\sum_{C\subseteq
      [h]}\left(\prod\limits_{j=1}^{|C|}
      m \cdot   a_{h-|C|+j}\right)\gamma(f_1((a_i), h,C)). 
  \end{equation*}

  A flag $(Y_i)$ of $Q$ collapses in $Q\setminus E$ if and only if
  $Y_{i+1}-Y_i\subseteq E$ for some $i$, which, by Corollary
  \ref{cor:flags}, occurs if and only if
  $r(Y_{i+1}\cap E)=r(Y_i\cap E)+1$ for some $i$ such that
  $a\notin Y_{i+1}$.  So, a flag $(Y_i)$ of $Q$ does not collapse if
  and only if $(Y_i)=f((X_i), h, C, (e_j))$ for some
  $((X_i), h, C, (e_j))\in \mathcal{T}$ with $C=[h]$.  The composition
  $(b_i)$ of the flag $(Y_i-E)$ is the same as that of $(Y_i)$, except
  that $b_{h+1}=1+\sum_{j=1}^h (m a_j-1)$ and $b_i=m a_{i-1}$ for
  $i>h+1$. Letting $f_2((a_i),h)$ denote $(b_i)$, since $C=[h]$, we get
  \begin{equation*}
    \mathcal{G}(Q\setminus E)
    =\sum_{(a_i)}\nu(M;(a_i))\sum\limits_{h=0}^k
    \left(\prod\limits_{j=1}^h m \cdot  a_j\right)\gamma(f_2((a_i), h)).
  \end{equation*}

  For $Q\setminus(E\cup a )$, we treat the cases $m=1$ and $m>1$
  separately, with the latter first.  A flag $(Y_i)$ of $Q$ collapses
  in $Q\setminus (E\cup a)$ if and only if
  $Y_{i+1}-Y_i\subseteq E\cup a$ for some $i$.  By Corollary
  \ref{cor:flags}, the flag $(Y_i)$ does not collapse if and only if
  $Y_1\neq\{a\}$, and for all $Y_{i+1}$ with $a\notin Y_{i+1}$, we
  have $Y_{i+1}-Y_i=\{x\}$ for some $x\in T$.  So, a flag $(Y_i)$ of
  $Q$ does not collapse if and only if $(Y_i)=f((X_i), h, C, (e_j))$
  for some $((X_i), h, C, (e_j))\in \mathcal{T}$ with $C=[h]$ and
  $h\geq 1$.  The composition $(b_i)$ of the flag $(Y_i-(E\cup a))$
  of $Q\setminus (E\cup a )$ is the same as that of $(Y_i)$, except
  that $b_{h+1}=\sum_{j=1}^h (ma_j-1)$ and $b_i=ma_{i-1}$ for
  $i>h+1$. Letting $f_3((a_i),h)$ denote $(b_i)$, since $C=[h]$, we get
  \begin{equation*}
    \mathcal{G}(Q\setminus (E\cup a)) =\sum_{(a_i)}\nu(M;(a_i)
    \sum\limits_{h=1}^k\left(\prod\limits_{j=1}^h
      m \cdot a_j\right)\gamma(f_3((a_i), h)).
  \end{equation*}

  Now consider $m=1$, which has one more way for the inclusion
  $Y_{i+1}-Y_i\subseteq E\cup a$ to occur, namely, if $i=h$,
  $Y_h\subseteq T$, and $p(Y_h)$ is a (necessarily independent) flat
  of $M$, since then $Y_{h+1}=\cl(Y_h\cup a)=Y_h\cup p(Y_h)\cup a$.
  So a flag $(Y_i)$ of $Q$ does not collapse if and only if
  $(Y_i)=f((X_i), h, C, (e_j))$ for some
  $((X_i), h, C, (e_j))\in \mathcal{T}$ with $C=[h]$, $h\geq 1$, and
  $|X_i-X_{i-1}|>1$ for at least one $i\leq h$.  When such flags
  exist, the composition $(b_i)$ of the flag $(Y_i-(E\cup a))$ of
  $Q\setminus (E\cup a)$ is the same as when $m>1$. The formula for
  the $\mathcal{G}$-invariant is also the same as when $m>1$, except
  that for a composition $(a_i)$ and integer $h \in [k]$ to
  contribute, we must have some $i\in [h]$ with $a_i>1$.

  Finally, if all flags of $Q$ collapse, then $M$ is the uniform
  matroid $U_{k,k}$; this can be detected from $\mathcal{G}(M)$, which
  is $k![11\ldots 1]$.  It follows from Theorem \ref{thm:higgs} that
  $Q\setminus (E\cup a)$ is also $U_{k,k}$, so
  $\mathcal{G}(Q)=\mathcal{G}(M)$.\qed
\end{proof}

The case of $Q\setminus(E\cup a)$ with $m=1$, which is isomorphic
to the Higgs lift of $M$, was treated in \cite[Proposition
4.2]{catdata} using rank sequences; the expression above using the
$\gamma$-basis is new.

We extend Theorem \ref{thm:config} to the variants of the free
$m$-cone.  We use the following observations about cyclic flats in a
deletion $N\setminus X$ of a matroid $N$.  If
$F\in\mathcal{Z}(N\setminus X)$, then $\cl_N(F)\in\mathcal{Z}(N)$.
The flats of $N\setminus X$ are the sets $F-X$ as $F$ ranges over the
flats of $N$.  So the cyclic flats of $N\setminus X$ are the sets
$F-X$, with $F\in\mathcal{Z}(N)$, that are cyclic.
  
\begin{theorem}\label{thm:config tipless}
  Let $M$ be a loopless matroid and let $Q=Q_m(M)$.
  \begin{enumerate}
  \item[\emph{(1)}] If $m>1$, then the configuration of $Q\setminus a$
    determines $M$.
  \item[\emph{(2)}] If $m>1$, then $m$ and the configuration of
    $Q\setminus E$ determine $M$.
  \item[\emph{(3)}] If $m>2$, then $m$ and the configuration of
    $Q\setminus(E\cup a)$ determine $M$.
  \end{enumerate}
\end{theorem}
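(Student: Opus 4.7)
The plan is to prove each of (1), (2), (3) by first characterizing the cyclic flats of the relevant deletion of $Q_m(M)$ and then reconstructing $M$ from the configuration. Using Lemmas \ref{lemma:coloops} and \ref{lemma:flats} and Corollary \ref{cor:rank}, one shows: for $Q\setminus a$ with $m>1$ the cyclic flats are $\mathcal{Z}(M)\cup\{q(F)\setminus a:F\text{ a nonempty flat of }M\}$; for $Q\setminus E$ with $m>1$ they are $\{\varnothing\}\cup\{T_F\cup\{a\}:F\text{ a nonempty flat of }M\}$; and for $Q\setminus(E\cup a)$ with $m>2$ they are $\{\varnothing\}\cup\{T_F:F\text{ a nonempty flat of }M\}$. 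The stated lower bounds on $m$ are precisely what is needed to make the listed cones cyclic: each former line through $a$ must retain at least three non-parallel points so as to contain a triangle, and in case (3) we need $|T_e|=m\geq 3$ since $\{e\}$ may itself be a flat of $M$. The reverse inclusion uses Lemma \ref{lemma:flats} to show that any flat of the deletion that is supported on $T$ and disjoint from $a$ is an independent subset of $Q$, so its elements are coloops of the restriction and it is not cyclic.

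For part (1), I would then adapt the strategy of Theorem \ref{thm:config}. Set $\mathcal{L}_a:=\{q(F)\setminus a:F\in\mathcal{F}_1(M)\}$ and formulate analogues of properties (L1), (L2), (L3) for cyclic lines in $Q\setminus a$. Because $a\in\cl_Q(q(F)\setminus a)$ for every nonempty flat $F$ of $M$, joins in $Q\setminus a$ satisfy $(q(F_1)\setminus a)\vee(q(F_2)\setminus a)=q(\cl_M(F_1\cup F_2))\setminus a$, which lets the proof of Theorem \ref{thm:config} transfer with only the obvious translation. The argument shows that $\mathcal{L}_a$ is the unique largest set satisfying these analogues: any competing set $\mathcal{C}$ containing a cyclic line $L$ of $M$ is forced to have every cone in $\mathcal{C}$ based inside $L$, so $\mathcal{C}$ lies within the rank-$3$ flat $q(L)\setminus a$, contradicting (L2) when $r(M)>2$. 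Once $\mathcal{L}_a$ is identified, $M$ is reconstructed exactly as in the final paragraph of the proof of Theorem \ref{thm:config}, with $r(M)\leq 2$ handled directly.

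For parts (2) and (3), reconstruction is immediate from the cyclic-flat characterization. The map $F\mapsto T_F\cup\{a\}$ (respectively $F\mapsto T_F$), sending the empty flat to $\varnothing$, is a lattice isomorphism between the lattice of flats of $M$ and the lattice of cyclic flats of the deletion; given $m$, the size of each cyclic flat recovers $|F|$ and its rank recovers $r(F)$. Hence the configuration together with $m$ yields the full lattice of flats of $M$ with the size and rank of each flat, and a loopless matroid is determined up to isomorphism by its lattice of flats together with the sizes of its rank-$1$ flats. The main obstacle throughout is the cyclic-flat characterization; once it is in place, parts (2) and (3) are automatic, and part (1) reduces to rerunning the argument of Theorem \ref{thm:config}.
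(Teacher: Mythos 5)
Your characterization of the cyclic flats of the three deletions is correct, as is your identification of why $m>1$ (resp.\ $m>2$) is needed, and your treatment of parts (2) and (3) is a legitimate shortcut compared with the paper: rather than reconstituting the data used in Theorem~\ref{thm:config}, you observe that $F\mapsto T_F\cup\{a\}$ (resp.\ $F\mapsto T_F$) is an order isomorphism from the lattice of flats of $M$ onto the lattice of cyclic flats of the deletion, with $|F|=(s-1)/m$ (resp.\ $s/m$) and $r(F)=\rho-1$ recoverable from the configuration once $m$ is known; since a loopless matroid is determined by its geometric lattice of flats together with the sizes of its atoms, this finishes (2) and (3) cleanly.

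The genuine gap is in part (1), in your justification of why $\mathcal{L}_a$ is recoverable. You claim that any set $\mathcal{C}$ satisfying the (L1)--(L3) analogues and containing a cyclic line $L$ of $M$ must lie inside the rank-$3$ flat $q(L)\setminus a$ and hence violate (L2). That is false: $\mathcal{C}$ may contain \emph{several} cyclic lines of $M$, and only the cones in $\mathcal{C}$ are forced to be based inside each such $L$. Concretely, for $M_2$ of Figure~\ref{fig:continuing examples}, the set $\mathcal{C}=\bigl\{\{1,2,3\},\,\{1,4,5\},\,q(\cl_{M_2}(1))\setminus a\bigr\}$ satisfies (L1)--(L3) in $Q_m(M_2)\setminus a$: the two lines of $M$ join to $E$ (rank $3$), each is coplanar with the cone over $\{1\}$, and the join of all three is $E(Q)-a$ because no cyclic flat other than $q(E)\setminus a$ contains $E$ together with an element of $T$. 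So $\mathcal{L}_a$ is \emph{not} the unique maximal set with these properties, and "contradicting (L2)" cannot be the argument. The correct route --- which is exactly the counting step in the paper's proof of Theorem~\ref{thm:config} --- is to show $\mathcal{L}_a$ is the unique \emph{largest} such set: $|\mathcal{L}_a|=|\mathcal{F}_1|$, whereas any competing $\mathcal{C}$ containing a line of $M$ has at most one cone, every other member is a cyclic line of $M$ containing at least three rank-$1$ flats, and the coplanarity condition (L3) forces $|\mathcal{C}|\leq\frac{|\mathcal{F}_1|-1}{2}+1$ or $\frac{|\mathcal{F}_1|}{3}+1$, both strictly less than $|\mathcal{F}_1|$. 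With that repair, your plan for (1) (identify $\mathcal{L}_a$, then rerun the reconstruction of Theorem~\ref{thm:config}, or equivalently add $1$ to the size of every configuration element above a member of $\mathcal{L}_a$ to recover the configuration of $Q$) goes through.
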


\begin{proof}
  Each line of $Q$ that contains $a$ has at least four points, so,
  using the observations above, the cyclic flats of $Q\setminus a$ are
  those of $M$ along with the sets $q(F)-a$ as $F$ ranges over the
  nonempty flats of $M$.  Let
  $\mathcal{L} =\{L-a\,:\,L \text{ is a line of } Q \text{ with } a\in
  L\}$.  From the proof of Theorem \ref{thm:config}, we see that
  $\mathcal{L}$ is the largest set of cyclic lines of $Q\setminus a$
  that satisfies properties (L1)--(L3) in that proof (using $E(Q)-a$
  in (L2)), and we can deduce, from the configuration of
  $Q\setminus a$, which of its elements come from lines in
  $\mathcal{L}$.  The cyclic flats of $Q$ that contain $a$ correspond
  to the elements $F$ in the configuration of $Q\setminus a$ where
  $L\leq F$ for at least one $L\in\mathcal{L}$, so we get the
  configuration of $Q$ by increasing the size assigned to such
  elements by $1$.  So, from the configuration of $Q\setminus a$, we
  get that of $Q$, from which we get $M$ by Theorem \ref{thm:config}.

  The cyclic flats of $Q\setminus E$ are $\varnothing$ and the sets
  $q(F)-F$ as $F$ ranges over the nonempty flats of $M$.  Increase the
  size assigned to each element $F$ in the configuration of
  $Q\setminus E$ by $$\sum\limits_{\substack{L \text{
        in the configuration,} \\
      L\leq F, \text{ and } r(L)=2}}\frac{|L|-1}{m}.$$ This gives the
  data that was used in the proof of Theorem \ref{thm:config} to get
  $M$.

  Similarly, when $m>2$, the cyclic flats of $Q\setminus (E\cup a)$
  are precisely $\varnothing$ and the sets $q(F)-(F\cup a)$ as $F$
  ranges over the nonempty flats of $M$.  For each element of positive
  rank in the configuration of $Q\setminus (E\cup a)$, increase its
  assigned size by $1$; this gives the configuration of
  $Q\setminus E$, from which we just showed that we get $M$.\qed
\end{proof}

The matroids $M_1$ and $M_2$ in Figure \ref{fig:continuing examples}
show that the inequalities $m>1$ for (1) and (2), and $m>2$ for (3),
are necessary.

\section{Size-Rank-Coloop Data}\label{section:size rank coloop}

In contrast to Theorem \ref{thm:G-inv}, the Tutte polynomial of $M$
does not determine that of $Q$: the matroids $N_1$ and $N_2$ in Figure
\ref{fig:counterex} have the same Tutte polynomial, but their free
$1$-cones do not.  (The same counterexample settles this question also
for the characteristic polynomial, which is
$\chi(M;x) = (-1)^{r(M)}T(M;1-x,0)$.)  We introduce a matroid
invariant that determines the Tutte polynomial of $Q$.

The Tutte polynomial $T(M;x,y)$ of $M$ is equivalent to the
\emph{size-rank data} of $M$, which is the multiset of pairs
$(|S|,r(S))$ for all $S\subseteq E(M)$.  The \emph{size-rank-coloop
  data} of $M$ is the multiset of triples $(|S|,r(S),c(S))$, for all
$S\subseteq E(M)$, where $c(S)$ is the number of coloops of $M|S$. We
show that we can compute $T(Q;x,y)$ using this stronger invariant.

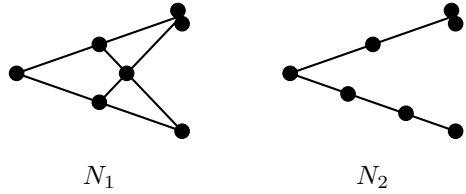
\begin{figure}[t]
  \centering
  \begin{tikzpicture}[scale=1.1]
    \draw[thick](0,0.5)--(2,-0.2);
    \draw[thick](0,0.5)--(2,1.2);
    \draw[thick](1,0.15)--(2,1.2);
    \draw[thick](1,0.85)--(2,-0.2);
    \filldraw (1,0.85) circle  (2.5pt);
    \filldraw (2,1.1) circle  (2.5pt);
    \filldraw (1.95,1.26) circle  (2.5pt);
    \filldraw (2,-0.2) circle  (2.5pt);
    \filldraw (1,0.15) circle  (2.5pt);
    \filldraw (1.33,0.5) circle  (2.5pt);
    \filldraw (0,0.5) circle  (2.5pt);
    \node at (1,-0.75) {$N_1$};
  \end{tikzpicture}
  \hspace{30pt}
  \begin{tikzpicture}[scale=1.1]
    \draw[thick](0,0.5)--(2,-0.2);
    \draw[thick](0,0.5)--(2,1.2);
    \filldraw (1,0.85) circle  (2.5pt);
    \filldraw (2,1.1) circle  (2.5pt);
    \filldraw (1.95,1.26) circle  (2.5pt);
    \filldraw (2,-0.2) circle  (2.5pt);
    \filldraw (0.7,0.251) circle  (2.5pt);
    \filldraw (1.4,0.017) circle  (2.5pt);
    \filldraw (0,0.5) circle  (2.5pt);
    \node at (1,-0.75) {$N_2$};
  \end{tikzpicture}
  \caption{The matroids above have the same Tutte polynomial, but
    $\mathcal{G}(N_1)\ne \mathcal{G}(N_2)$. Their free $1$-cones have
    different Tutte polynomials.}\label{fig:counterex}
\end{figure}

\begin{theorem}\label{thm:size rank coloop}
  Let $Q=Q_m(M)$ for some $m\geq 1$. The Tutte polynomial of $Q$ can
  be computed from the size-rank-coloop data of $M$.
\end{theorem}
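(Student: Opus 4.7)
The Tutte polynomial $T(Q;x,y)$ is determined by the multiset $\{(|S|, r_Q(S)) : S \subseteq E(Q)\}$. The plan is to group subsets of $E(Q)$ by their projection $P = p(S) \subseteq E$, and show that the contribution of each fiber $p^{-1}(P)$ to this multiset depends only on the triple $(|P|, r_M(P), c_M(P))$, where $c_M(P)$ is the number of coloops of $M|P$; summing over $P$ then expresses $T(Q;x,y)$ in terms of the size-rank-coloop data of $M$.

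For a fixed $P$, parameterize $S \in p^{-1}(P)$ by $\delta = [a \in S]$ together with, for each $e \in P$, the pair $(\epsilon_e, A_e)$ where $\epsilon_e = [e \in S]$, $A_e = S \cap T_e$, and $\epsilon_e + |A_e| \geq 1$; then $|S| = \delta + \sum_{e \in P}(\epsilon_e + |A_e|)$, and Corollary~\ref{cor:rank} gives $r_Q(S) \in \{r_M(P), r_M(P)+1\}$, with the larger value occurring precisely when $a \in \cl_Q(S)$. If $\delta = 1$ then $a \in \cl_Q(S)$ trivially; if $\delta = 0$ but some $\epsilon_e + |A_e| \geq 2$, then two points of the line $q(\{e\}) = \{a,e\} \cup T_e$ lie in $S$, so again $a \in \cl_Q(S)$. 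The remaining and crucial case is $\delta = 0$ with $\epsilon_e + |A_e| \leq 1$ for every $e \in P$; writing $P = P_E \sqcup P_T$ with $P_E = \{e : \epsilon_e = 1\}$ and $P_T = \{e : |A_e| = 1\}$, one has $|S| = |P|$.

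The key claim is that in this remaining case, $a \notin \cl_Q(S)$ if and only if $r_M(P) = r_M(P_E) + |P_T|$, and this in turn is equivalent to $P_T$ being a subset of the set of coloops of $M|P$. For the direction ($\Leftarrow$), the set $F = \cl_M(P_E) \cup (S \cap T)$ is shown to satisfy conditions (1)--(3) of Lemma~\ref{lemma:flats}, so it is a flat of $Q$ containing $S$ but not $a$. For the converse, any flat $F$ of $Q$ with $S \subseteq F$ and $a \notin F$ has $F \cap E \supseteq \cl_M(P_E)$ by (1) and $(F \cap E) \cap P_T = \varnothing$ by the injectivity of $p$ on $F$ from (2); condition (3) then forces $r_M(F \cap E \cup P_T) = r_M(F \cap E) + |P_T|$, which by a short submodularity argument descends to $r_M(P) = r_M(P_E) + |P_T|$ (using $P_E \subseteq F \cap E$ and the fact that independence in $M/(F \cap E)$ implies independence in $M/P_E$). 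The final equivalence with $P_T$ being a set of coloops of $M|P$ follows by an elementary induction on $|P_T|$ using submodularity.

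Granting this, the number of $S \in p^{-1}(P)$ with $r_Q(S) = r_M(P)$ equals $\sum_{P_T \subseteq \mathrm{Coloops}(M|P)} m^{|P_T|} = (1+m)^{c_M(P)}$, each of size $|P|$; the remaining $S$ in $p^{-1}(P)$ have $r_Q(S) = r_M(P)+1$, and their size-generating function is $(1+z)\bigl((1+z)^{m+1} - 1\bigr)^{|P|} - (1+m)^{c_M(P)} z^{|P|}$. Both contributions are functions of $(|P|, r_M(P), c_M(P))$ alone (together with the fixed parameters $m$ and $k = r(M)$, which is read off from the size-rank data), so summing over $P \subseteq E$ expresses $T(Q;x,y)$ in terms of the size-rank-coloop data of $M$. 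The main obstacle is establishing the key claim in the crucial case; the rest is routine enumeration.
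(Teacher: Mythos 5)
Your proof is correct and follows essentially the same strategy as the paper's: fiber the subsets of $E(Q)$ over their projections $P=p(S)$, reduce the computation of $r_Q(S)$ to deciding whether $a\in\cl_Q(S)$, and identify the one nontrivial case by exactly the same criterion the paper uses (namely that the elements of $P$ represented through $T$ must all be coloops of $M|P$), so that the fiber's contribution depends only on $(|P|,r(P),c(P))$. The only cosmetic differences are that you verify the key claim by exhibiting an explicit flat via Lemma~\ref{lemma:flats} plus submodularity where the paper uses a basis-and-circuit argument, and that you make the final enumeration explicit with a generating function.
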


\begin{proof}
  We refer to the following information as the \emph{type} of a subset
  $S'$ of $E(Q)$:
  \begin{itemize}
  \item[(1)] the set $S=p(S')$,
  \item[(2)] $r(S)$,
  \item[(3)] whether $a\in S'$,
  \item[(4)] for each $e\in S$, (a) the size $|S'\cap T_e|$, (b)
    whether $e \in S'$, and (c) whether $e$ is a coloop of $M|S$.
  \end{itemize}
  We call $S$ the underlying set of the type.  Below we show that from
  the type of $S'$, we can deduce $|S'|$ and $r(S')$, and so the
  contribution of $S'$ to $T(Q;x,y)$.  The proof is then completed by
  showing that for each $S\subseteq E$ and each possible type of
  subsets of $E(Q)$ with underlying set $S$, we can find the number of
  subsets of $E(Q)$ having that type from the triple
  $(|S|,r(S),c(S))$.

  Let $s=|S\cap S'|+\sum_{e\in S} |S'\cap T_e|$.  If $a\not\in S'$,
  then $|S'|=s$, otherwise $|S'|=s+1$.  Thus, we get $|S'|$ from the
  type of $S'$.

  Now $r(S')=r(S)$ if $a\not\in\cl(S')$, otherwise $r(S')=r(S)+1$, by
  Corollary \ref{cor:rank}, so to show that we get $r(S')$ from the
  type of $S'$, it suffices to show that the type of $S'$ determines
  whether $a\in\cl(S')$.  We have $a\in\cl(S')$ if either $a\in S'$ or
  $|S'\cap (T_e\cup e)|>1$ for some $e\in S$, so now assume
  $a\not\in S'$ and $|S'\cap (T_e\cup e)|=1$ for all $e\in S$.

  With those assumptions, we claim that $a\in \cl(S')$ if and only if
  $|S'\cap T_e|>0$ for some $e\in S$ that is not a coloop in $M|S$.
  First assume that $a\in \cl(S')$.  Now $\cl(S')=q(F)$ for some flat
  $F$ of $M$ by Lemma \ref{lemma:flats}. Let $B\subseteq S'$ be a
  basis of $Q|q(F)$.  We have $|p(B)|=|B|=r(F)+1$ and $r(p(B))=r(F)$,
  so $p(B)$ contains a circuit $C$.  Since $C\not\subseteq B$, there
  is an $e\in C-S'$.  So $e\in S$, it is not a coloop in $M|S$, and
  $|S'\cap T_e|>0$.  Now assume that $a\not\in \cl(S')$.  If $e\in S$
  and $e'\in S'\cap T_e$, then $e'$ is a coloop of $Q|S'$ by Lemma
  \ref{lemma:coloops}.  Since $r(S)=r(S')$ and $r(S-e)=r(S'-e')$, it
  follows that $e$ is a coloop of $M|S$.  This completes the proof
  that we get $r(S')$ from the type of $S'$.

  We now show that for each set $S\subseteq E$ and possible type with
  underlying set $S$, we get the number of subsets of $E(Q)$ of that
  type from the triple $(|S|,r(S),c(S))$.  As just shown, $r(S')=r(S)$
  if and only if (i) $a\not\in S'$, (ii) $|S'\cap (T_e\cup e)|=1$ for
  all $e\in S$, and (iii) each $e\in S$ with $|S'\cap T_e|=1$ is a
  coloop of $M|S$; otherwise $r(S')=r(S)+1$.  For any positive integer
  $h$, the number of sets $S'$ with $p(S')=S$ and $|S'|=h$ depends
  just on $m$ and $|S|$, so the proof is completed by observing that
  the number of such sets $S'$ that satisfy conditions (i)--(iii) is
  determined by $|S|$ and $c(S)$.\qed
\end{proof}

The free $1$-cones of $N_1$ and $N_2$ in Figure \ref{fig:counterex}
have different Tutte polynomials, so $N_1$ and $N_2$ have different
size-rank-coloop data. Indeed, $N_1$ has $20$ sets of size $4$, rank
$3$, with one coloop, while $N_2$ has $18$.

The proof of Theorem \ref{thm:size rank coloop} applies, with only
minor changes in which sets $S'$ are of concern, to prove the following
extension to the variants of the free $m$-cone introduced in Section
\ref{section:variants}.

\begin{corollary}
  Let $Q=Q_m(M)$ for some $m\geq 1$. The Tutte polynomial of
  $Q\setminus a$, of $Q\setminus E$, and of $Q\setminus(E\cup a)$
  can be determined from the size-rank-coloop data of $M$.
\end{corollary}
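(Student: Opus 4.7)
The plan is to adapt the proof of Theorem \ref{thm:size rank coloop} with the only modification being a restriction on which subsets $S'\subseteq E(Q)$ enter the sum defining the Tutte polynomial. For a variant $Q'\in\{Q\setminus a,\,Q\setminus E,\,Q\setminus(E\cup a)\}$,
$$T(Q';x,y)=\sum_{S'\subseteq E(Q')}(x-1)^{r(E(Q'))-r(S')}(y-1)^{|S'|-r(S')},$$
where $r$ is the rank function of $Q$ (which agrees with that of $Q'$ on $E(Q')$). So it suffices to show that the multiset of pairs $(|S'|,r(S'))$, as $S'$ ranges over subsets of $E(Q')$, is determined by the size-rank-coloop data of $M$, together with the rank $r(E(Q'))$.

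First, the notion of \emph{type} from the proof of Theorem \ref{thm:size rank coloop} carries over verbatim, as does the argument that the type of $S'$ determines $|S'|$ and $r(S')$. The reason is that Corollary \ref{cor:rank} and the characterization of when $a\in\cl(S')$ depend only on $p(S')$, its rank, the sizes $|S'\cap T_e|$ for $e\in p(S')$, whether $e\in S'$, whether $a\in S'$, and which $e\in p(S')$ are coloops of $M|p(S')$; none of this used whether $a$ or $E$ were in the ambient ground set.

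For the counting step, fix $S\subseteq E$ and an admissible type $\tau$ with underlying set $S$. The number of $S'\subseteq E(Q)$ of type $\tau$ is a product over $e\in S$ of binomials $\binom{m}{|S'\cap T_e|}$, combined with the free binary choices ``$a\in S'$?'' and, for each $e\in S$, ``$e\in S'$?''. Restricting to $S'\subseteq E(Q')$ simply forces certain of these binary choices to their excluded value: no choice for $a$ in the first and third variants, no choice for each $e\in S$ in the second and third variants (which additionally forces $|S'\cap T_e|\ge 1$ for every $e\in S$). In each case the restricted count still factors over $e\in S$ and depends only on the coloop/noncoloop partition of $S$ inside $M|S$, hence only on $|S|$ and $c(S)$. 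Aggregating by $(|S|,r(S),c(S))$ then yields the desired formula for $T(Q';x,y)$, once we observe that $r(E(Q'))$ is itself read off from $(|E|,r(M),c(E))$: indeed $r(Q\setminus a)=r(Q\setminus(E\cup a))=r(M)+1$ whenever $|E|\ge 1$, using a basis of $M$ and, for the third variant, the replacement of each basis element $e$ by an element of $T_e$, and similarly $r(Q\setminus E)=r(M)+1$.

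I do not foresee a serious obstacle; the hardest part is bookkeeping the third variant, where forcing $|S'\cap T_e|\ge 1$ for all $e\in S$ changes the per-element binomial factor but preserves the product structure and its dependence only on $|S|$ and $c(S)$. Since the statement concerns ranks and sizes of arbitrary subsets, not of flats, the subtleties that distinguished $m=1$ from $m>1$ in Section \ref{section:variants} do not reappear, and a single uniform argument handles all $m\geq 1$.
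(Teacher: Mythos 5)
Your proposal is essentially the paper's argument: the paper proves this corollary simply by observing that the proof of Theorem \ref{thm:size rank coloop} applies with only minor changes in which sets $S'$ are of concern, and your restriction of the type-counting to $S'\subseteq E(Q')$, with the per-element factors still depending only on $|S|$, $c(S)$, and $m$, is exactly that. One small inaccuracy worth noting: your claim that $r(Q\setminus(E\cup a))=r(M)+1$ whenever $|E|\geq 1$ fails when $m=1$ and $M$ is free, since then $Q\setminus(E\cup a)$ is the Higgs lift of $U_{k,k}$, namely $U_{k,k}$ itself (Theorem \ref{thm:higgs}), of rank $r(M)$; equivalently $a\notin\cl(T)$ there, so Corollary \ref{cor:rank} gives $r(T)=r(p(T))=r(M)$, and the basis-replacement argument only yields rank at least $r(M)$. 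This slip is harmless to your proof, because $r(E(Q'))$ is the largest rank occurring in the multiset of pairs $(|S'|,r(S'))$ that you have already shown is determined by the size-rank-coloop data, so no separate computation of it is required.
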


We now prove that $\mathcal{G}(M)$ gives the size-rank-coloop data of
$M$.

\begin{theorem}
  Let $M=(E,r)$ be a matroid with $|E|=n$. The size-rank-coloop data
  of $M$ can be computed from $\mathcal{G}(M)$.
\end{theorem}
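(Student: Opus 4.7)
The plan is to recover, for each triple $(i,k,c)$ of nonnegative integers, the count
$$T(i,k,c) := \#\{A\subseteq E : |A|=i,\ r(A)=k,\ c(A)=c\}$$
from $\mathcal{G}(M)$; these counts encode the size-rank-coloop data of $M$. Since $\mathcal{G}(M)$ records the multiset of rank sequences $\underline{r}(\pi)$ over all permutations $\pi$ of $E$, it suffices to express each $T(i,k,c)$ in terms of frequencies of rank sequences.

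The key step, which I expect to be the main point to verify, is the following equivalence: for $A\subseteq E$ with $|A|=i$, $r(A)=k$, and distinct $e_1,\ldots,e_j\in A$,
$$\{e_1,\ldots,e_j\} \text{ consists of coloops of } M|A \iff r(A - \{e_1,\ldots,e_s\}) = k-s \text{ for all } s\in\{0,1,\ldots,j\}.$$
The forward direction follows because removing a coloop drops the rank by one. For the converse, I would use that if $e$ is a coloop of a matroid $N$, then the coloops of $N\setminus e$ are precisely the coloops of $N$ other than $e$: the chain condition forces $e_1$ to be a coloop of $M|A$; the next step then forces $e_2$ to be a coloop of $M|(A - e_1)$, and so of $M|A$; iterating gives the claim.

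With this in hand, let $P_j(i,k)$ count ordered tuples $(A,e_1,\ldots,e_j)$ satisfying the chain condition with $|A|=i$ and $r(A)=k$. The equivalence above gives
$$P_j(i,k) = j!\sum_{\substack{|A|=i\\ r(A)=k}}\binom{c(A)}{j}.$$
Separately, each such tuple extends---by prepending any of the $(i-j)!$ orderings of $A-\{e_1,\ldots,e_j\}$ and appending any of the $(n-i)!$ orderings of $E-A$---to a permutation $\pi$ of $E$ whose rank sequence satisfies $r_1+\cdots+r_{i-j}=k-j$ and $r_{i-j+1}=\cdots=r_i=1$, and every such permutation arises in this way. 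Hence $P_j(i,k)\cdot(i-j)!\cdot(n-i)!$ equals the number of permutations of $E$ with these rank-sequence constraints, which can be read off from $\mathcal{G}(M)$. Setting $N_j(i,k)=P_j(i,k)/j!=\sum_c\binom{c}{j}T(i,k,c)$ and inverting this Pascal-matrix relation yields
$$T(i,k,c) = \sum_{j\geq c}(-1)^{j-c}\binom{j}{c}N_j(i,k),$$
completing the recovery of the size-rank-coloop data from $\mathcal{G}(M)$.
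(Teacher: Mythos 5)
Your proof is correct and takes essentially the same approach as the paper: both count, for each $(i,k,j)$, the permutations whose rank sequence has a block of $j$ ones in positions $i-j+1,\dots,i$ atop a length-$(i-j)$ prefix of rank $k-j$, identify this count with $j!\sum_c\binom{c}{j}T(i,k,c)\,(i-j)!\,(n-i)!$, and invert the resulting triangular system. Your explicit statement and proof of the equivalence between the trailing-ones condition and the chosen elements being coloops of $M|A$ is exactly the (lightly justified) key step in the paper's argument.
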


\begin{proof}
  For a triple of nonnegative integers $(s,t,c)$ with $s\geq t\geq c$,
  let $F(s,t,c)$ be the set of subsets $X$ of $E$ with $|X|=s$ and
  $r(X)=t$ for which $M|X$ has $c$ coloops, and let $G(s,t,c)$ be the
  set of permutations $\pi$ of $E$ for which $[\underline{r}(\pi)]$
  has exactly $t$ ones in the first $s$ places, with each of the last
  $c$ of those being one.  More elements than those corresponding to
  the last string of ones may be coloops among the first $s$ elements,
  so if $\pi\in G(s,t,c)$, then
  $\{\pi(i)\,:\,i\in[s] \}\in \bigcup_{c'\geq c} F(s,t,c')$.  A set
  $X \in F(s,t,c')$ gives rise to permutations $\pi\in G(s,t,c)$ by
  picking $c$ of the coloops, taking a permutation of $X$ with those
  $c$ coloops last, and placing the elements of $E-X$ after those $c$
  coloops.  So,
  $$\sum_{c'\geq c}|F(s,t,c')|\binom{c'}{c}c!(s-c)!(n-s)! = |G(s,t,c)|.$$
  We get $|G(s,t,c)|$ from $\mathcal{G}(M)$.  For a fixed $s$ and $t$,
  these equalities form an upper triangular system of equations in
  $|F(s,t,c)|$, so, as needed, we can find the terms $|F(s,t,c)|$.\qed
\end{proof}

The results above show that the size-rank-coloop data is strictly
stronger than the Tutte polynomial, and the $\mathcal{G}$-invariant is
at least as strong as the size-rank-coloop data.  It seems most likely
that the $\mathcal{G}$-invariant is strictly stronger than the
size-rank-coloop data, but we do not currently have pairs of matroids
with the same size-rank-coloop data and different
$\mathcal{G}$-invariants.  In summary, we have the invariants below
listed from weakest to strongest, where (2) is strictly stronger than
(1), and (4) is strictly stronger than (3), but we do not yet know
about (3) and (2):
\begin{enumerate}
\item[(1)] the Tutte polynomial,
\item[(2)] the size-rank-coloop data,
\item[(3)] the $\mathcal{G}$-invariant, and
\item[(4)] the configuration.
\end{enumerate}

\vspace{5pt}

\begin{center}
 \textsc{Acknowledgments}
\end{center}

\vspace{3pt}

We thank Joseph Kung for valuable observations and comments on the
topics of this paper.  We thank both referees for their careful
reading of the manuscript, their valuable comments, and for pointing
out a gap in the original proof of Theorem \ref{thm:config}.

\bibliographystyle{alpha}

\end{document}